\documentclass[11pt]{amsart}
\usepackage{amsmath,amsthm,amssymb,bbm}
\usepackage{tikz}
\addtolength{\oddsidemargin}{-.8in}
\addtolength{\evensidemargin}{-.8in}
\addtolength{\textwidth}{1in}
\setlength{\marginparwidth}{0.8in}
\usepackage{epsfig}

\newtheorem{thm}[equation]{Theorem}

\newtheorem{lem}[equation]{Lemma}
\newtheorem{prop}[equation]{Proposition}
\theoremstyle{definition}

\newtheorem{rem}[equation]{Remark}

\numberwithin{equation}{section}
\newcommand{\1}{\mathbbm{1}}

\def\P{{\mathbb{P}}}
\def\R{{\mathbb{R}}}
\def\N{{\mathbb{N}}}
\def\A{{\mathbb{A}}}
\def\E{{\mathbb{E}}}

\renewcommand{\leq}{\leqslant}
\renewcommand{\ge}{\geqslant}\renewcommand{\geq}{\geqslant}

\DeclareMathOperator*{\argmin}{arg\,min}

\begin{document}

\title[Self-similar solutions of kinetic-type equations]{Self-similar solutions of kinetic-type equations:\\
the boundary case}

\author[K.~Bogus, D.~Buraczewski and A.~Marynych]
{Kamil Bogus, Dariusz Buraczewski and Alexander Marynych}

\address{Kamil Bogus, Faculty of Pure and Applied Mathematics, Wroc{\l}aw University of Science
and Technology, ul. Wybrze\.ze Wyspia\'nskiego 27, 50-370 Wroc{\l}aw, Poland}
\email{kamil.bogus@pwr.edu.pl}

\address{Dariusz Buraczewski, Mathematical Institute, University of
Wroc{\l}aw, Plac Grunwaldzki 2/4, 50-384 Wroc{\l}aw, Poland}
	\email{dbura@math.uni.wroc.pl}

\address{Alexander Marynych, Faculty of Computer Science and Cybernetics, Taras Shevchenko National University of Kyiv, 01601 Kyiv, Ukraine}
\email{marynych@unicyb.kiev.ua}

\thanks{K.~Bogus and D.~Buraczewski were partially supported by the National Science Center, Poland (Sonata Bis, grant number DEC-2014/14/E/ST1/00588). A.~Marynych was partially supported by the Return Fellowship of the Alexander von Humboldt Foundation.}

	\keywords{Biggins martingale, derivative martingale, Kac model, kinetic equation, random trees, smoothing transform}
	\subjclass[2010]{60F05, 82C40}

\begin{abstract}
For a time dependent family of probability measures $(\rho_t)_{t\ge 0}$ we consider a~kinetic-type evolution equation $\partial \phi_t/\partial t + \phi_t = \widehat{Q} \phi_t$ where $\widehat{Q}$ is a smoothing transform and $\phi_t$ is the Fourier--Stieltjes transform of $\rho_t$. Assuming that the initial measure $\rho_0$ belongs to the domain of attraction of a stable law, we describe asymptotic properties of $\rho_t$, as $t\to\infty$. We consider the boundary regime when the standard normalization leads to a degenerate limit and find an appropriate scaling ensuring a non-degenerate self-similar limit. Our approach is based on a probabilistic representation of probability measures $(\rho_t)_{t\ge 0}$ that refines the corresponding construction proposed in Bassetti and Ladelli [Ann. Appl. Probab. 22(5): 1928--1961, 2012].
\end{abstract}

\maketitle

\section{Introduction}

In the paper we consider a kinetic-type evolution equation  for a time dependent family of probability measures $(\rho_t)_{t\ge 0}$. Let
$$\phi(t,\xi)=\int_\R e^{i\xi v}\rho_t({\rm d}v),\quad t\geq 0,\quad \xi\in\R,
$$
be the Fourier--Stieltjes transform (the characteristic function) of $\rho_t$. We are interested in the solution of the following Cauchy problem
\begin{equation} \label{eqboltzivp}
\frac{\partial }{\partial t} \phi(t,\xi)+\phi(t,\xi)=\widehat{Q}(\phi(t,\cdot),\ldots,\phi(t,\cdot))(\xi),\quad t>0,\quad \phi(0,\xi)=\phi_0(\xi),\quad \xi\in\R,
\end{equation} where $\widehat{Q}$ is a smoothing transform. The smoothing transform $\widehat{Q}$ is defined by the equality
$$\widehat{Q}(\phi_1,\ldots,\phi_N)(\xi):=\E(\phi_1(A_1\xi)\cdot  \ldots \cdot \phi_N(A_N\xi)), \quad \xi\in\R,$$
where $\phi_1,\ldots,\phi_N$ are characteristic functions, $N$ is a fixed positive integer, and a random vector $\A = (A_1, \ldots , A_N)$ consists of positive real-valued random variables defined on a common probability space $(\Omega,\mathcal{F},\mathbb{P})$.
The initial condition $\phi_0$ is the characteristic function of some random variable $X_0$ defined on $(\Omega,\mathcal{F},\mathbb{P})$.

The equation of the form \eqref{eqboltzivp} with $N=2$ and $\A = (\sin\theta,\cos \theta)$, where $\theta$ is a random angle uniformly distributed on $[0,2\pi)$, was introduced and investigated by Kac \cite{Kac} as a model of behavior of a particle in a homogeneous gas. In subsequent works the Kac model was generalized in various directions including one dimensional dissipative Maxwell models for colliding molecules \cite{Pareschi}, models describing economical dynamics \cite{Matthes} and the inelastic Boltzmann equation \cite{bobylev,bobylev2}. We refer to \cite{BasettiLadelli2012,BasettiLadelliMatthes,BassettiPerversi:2013}
 for other examples and a comprehensive bibliography.

In this paper we study asymptotic behavior of the solution $\phi$ to equation \eqref{eqboltzivp} from probabilistic point of view and prove related limit theorems. This problem was recently addressed in \cite{BasettiLadelli2012} where it was shown that under mild assumptions, which we discuss later, there exists a parameter $\mu$, depending on the initial condition $\phi_0$ and the law of $\A$, such that the rescaled solution to \eqref{eqboltzivp}, namely
 \begin{equation}\label{eq:ss1}
w(t,\xi) = \phi(t, e^{-\mu t}\xi),\quad t\geq 0,\quad \xi\in\R,
 \end{equation}
converges to a nondegenerate limit as $t\to\infty$ and the limit is a fixed point of  a smoothing transform pertained to $\widehat Q$. The main goal of our paper is to present a class of solutions to~\eqref{eqboltzivp} which, after rescaling as in \eqref{eq:ss1}, converge to a degenerate limit, yet it is possible to~find a different normalization ensuring a nondegenerate limit possessing some self-similarity properties. To achieve our aims we propose a refinement of the probabilistic construction of the solution $\phi$ presented in \cite{BasettiLadelliMatthes} and express $\phi$ via a continuous-time branching random walk.

Firstly, we state assumptions concerning the initial condition $\phi_0$. We suppose, similarly as in \cite{BasettiLadelli2012} and \cite{BassettiPerversi:2013}, that
 the distribution function $F_0$ of $X_0$ satisfies one of the following hypotheses $(H_{\gamma})$ for some $\gamma\in(0,2]$:
\begin{itemize}
\item[$(H_1)$]
either
\subitem{(a)} $\int_\R|v| \,{\rm d}F_0(v) <+\infty$ \textit{and then we set}
$m_0:=\int_\R v\,{\rm d}F_0(v)$\\
\textit{or}
\subitem{(b)}
$F_0$ \textit{ satisfies the conditions}
$$
\lim_{x \to+\infty} x \bigl(1-F_0(x)\bigr) =
\lim_{x \to-\infty} |x| F_0(x) =c_0^+\in (0,\infty),
$$
and
$$
\lim_{R\to+\infty}\int_{-R}^{R}v{\rm d}F_0(v)=:m_0\in(-\infty,\infty).
$$
\item[$(H_2)$]
$0<\sigma_0^2:=\int_\R|v|^2 \,{\rm d}F_0(v) <+\infty$ \textit{and}
$\int_\R v\,{\rm d}F_0(v)=0$.
\item[$(H_\gamma)$]
\textit{If} $\gamma\in(0,1) \cup(1,2)$,
$F_0$ satisfies the conditions
$$
\lim_{x \to+\infty} x^\gamma\bigl(1-F_0(x)\bigr) =c_0^+<+\infty,\qquad
\lim_{x \to-\infty} |x|^\gamma F_0(x) =c_0^-<+\infty
$$
\textit{with} $c_0^++ c_0^->0$
\textit{and}, \textit{in addition}, $\int_\R v\,{\rm d}F_0(v)=0$ \textit{if  $\gamma\in (1,2)$.}
\end{itemize}
Further, we define the function $\hat g_\gamma:\R\mapsto \mathbb{C}$ by
\begin{equation}
\label{chaSta}
\hat g_\gamma(\xi):=
\begin{cases}
e^{ i m_0 \xi}, & \text{if } \gamma=1 \text{ and (a) of }(H_1) \text{ holds},\\
e^{ i m_0 \xi- \pi c_0^+ |\xi| }, & \text{if } \gamma=1 \text{ and (b) of }(H_1) \text{ holds},\\
e^{ - \sigma_0^2 |\xi|^2/2 }, &\text{if } \gamma=2 \text{ and }(H_2)\text{ holds},\\
e^{ -k_0 |\xi|^\gamma
(1-i \eta_0 \tan({\pi\gamma}/{2} )\operatorname{sign}\xi) }, &
\text{if }\gamma\in(0,1) \cup(1,2) \text{ and }(H_\gamma)\text{ holds},
\end{cases}
\end{equation}
where
%
\begin{equation*}
k_0 = (c_0^{+}+c_0^{-}) \frac{\pi}{2\Gamma(\gamma)\sin(\pi\gamma/2)},
\qquad \eta_0 = \frac{c_0^{+}-c_0^{-} }{c_0^{+}+c_0^{-}}.
\end{equation*}
Observe that the condition $(H_\gamma)$ is equivalent to the fact that the law of $X_0$ (the law of $X_0-m_0$ in case $H_1(b)$) is centered and (except case $H_1(a)$) belongs to the domain of normal attraction of a $\gamma$-stable law with the characteristic function $\hat g_\gamma$, see a concluding remark on p.~581 in \cite[Chapter XVIII.5]{Feller:1971}.

Now we formulate our hypotheses on the smoothing transform $\widehat Q$. Our first assumption is that the weights $(A_i)_{i=1,\ldots,N}$ are a.s.~positive. Next we define the function $\Phi:[0,\infty)\mapsto
\R\cup\{+\infty\}$ via
 $$
 \Phi(s) = \E\bigg[\sum_{i=1}^N A_i^s\bigg] - 1,\quad s\geq 0,
$$
and assume that $s_{\infty}>0$ where $s_{\infty} := \sup\{s\geq 0: \Phi(s) < \infty\}$. Note that the function $\Phi$ is smooth and convex on $(0,s_{\infty})$. The function
$$
\mu(s) = \frac{\Phi(s)}{s},\quad s > 0,
$$ is called spectral function, see \cite{bobylev}. Observe that $\mu(s)$ is equal to the tangent of the angle between the vector joining $(0,0)$ and $(s,\Phi(s))$ and the positive horizontal half-axis. Since $\Phi$ is strictly convex and smooth there exists exactly one point $\gamma^{\ast}$ minimizing the spectral function, and then the corresponding line is just tangent to the function $\Phi$ at point $(\gamma^{\ast}, \Phi(\gamma^{\ast}))$, see Fig.~\ref{fig_1}.  Moreover, $\mu(\gamma^{\ast}) = \Phi'(\gamma^{\ast})$.

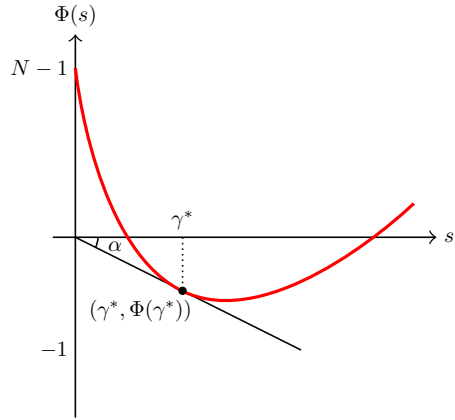
\begin{figure}[!hbtp]
\centering
 \scalebox{1.5}{\begin{tikzpicture}
\draw[->] (-0.2,0) -- (3.2,0) node[right,scale=0.5] {$s$};
\draw[->] (0,-1.6) -- (0,1.8) node[above,scale=0.5] {$\Phi(s)$};
\draw[-] (0,0) -- (2,-1) ;
\draw [densely dotted] (0.95,0) -- (0.95,-0.45) ;
\node[left,scale=0.5] (c) at (0,1.5) {$N-1$};
\node[left,scale=0.5] (c) at (0,-1) {$-1$};
\node[above,scale=0.5] (c) at (0.95,0) {$\gamma^{\ast}$};
\node[xshift=-12.0, yshift=-4.3,scale=0.5,radius=5.0] (c) at (1,-0.5) {$(\gamma^{\ast},\Phi(\gamma^{\ast}))$};
\draw (0.95,-0.475) circle (0.02);
\draw [thick,red] plot [smooth, tension=1] coordinates {(0,1.5) (1.0,-0.5) (3.0,0.3)};
\draw[fill=black] (0.95,-0.475) circle (0.03);
\draw (0.2,0) arc (0:-28:0.2) node[xshift=5.0,yshift=0.5,scale=0.5] {$\alpha$};;
\end{tikzpicture}}
\caption{Plot of the function $s\mapsto \Phi(s)$ (solid red) with $\tan\alpha=\mu(\gamma^{\ast})=\Phi^{\prime}(\gamma^{\ast})=\Phi(\gamma^{\ast})/\gamma^{\ast}$.}
\label{fig_1}
\end{figure}


In the series of papers \cite{BasettiLadelli2012,BasettiLadelliMatthes} Bassetti, Ladelli and Matthes found a probabilistic interpretation of the solution $\phi$ via labelled random trees. Assuming that $(H_{\gamma})$ holds for some $\gamma\in(0,2]$ and there exists $\delta>\gamma$ such that $\mu(\delta)<\mu(\gamma)<\infty$, it is shown in \cite[Theorem 2.2]{BasettiLadelli2012} that  $\phi(t, e^{-\mu(\gamma)t}\xi)$ converges to a nondegenerate limit being the characteristic function of the law of the limit of some positive martingale related to a family of random labelled trees. Clearly, if $\gamma=\argmin\mu(s)$  no such $\delta$ exists and, moreover, it can be checked that the corresponding martingale converges to $0$. As manifested in the title of the paper and motivated by Biggins and Kyprianou \cite{Biggins:Kyprianou:2005}, who considered the smoothing transform in the case  $\gamma^{\ast}=1$ and $\mu(\gamma^{\ast}) = 0$, we call this situation the {\it boundary case}.

\medskip

The main result of our paper is given by Theorem \ref{thm:main} below, and provides the correct normalization in the boundary case leading to a non-degenerate limit. As we will see, the right normalization involves a subexponential term and the limit is a fixed point of some smoothing transform.

\begin{thm}\label{thm:main}
Assume that for some $\gamma\in (0,2]$ the hypothesis $(H_{\gamma})$ is satisfied and
$$
\gamma=\argmin_{s\in(0,s_\infty)}\mu(s)=\gamma^{\ast}\in(0,s_\infty).
$$
Then there exists a probability measure $\rho_{\infty}$ such that the function $\phi$, the unique solution to \eqref{eqboltzivp}, satisfies
$$
  \lim_{t\to\infty}\phi\big(t,t^{\frac{1}{2\gamma}} e^{-\mu(\gamma)t}\xi\big) = w_{\infty}(\xi),\quad \xi\in\R,
  $$ where
   $w_{\infty}$ is the Fourier-Stieltjes transform of $\rho_{\infty}$. Moreover, $w_{\infty}$ has the following representation
  $w_{\infty}(\xi)=\E \widehat{g}_{\gamma}(\xi c_{\gamma} D^{1/\gamma}_{\infty})$,
  where $c_{\gamma}:=\left(\frac{2}{\pi\gamma^2\Phi^{\prime\prime}(\gamma)}\right)^{\frac{1}{2\gamma}}$ and $D_{\infty}$ is a.s.~positive random variable defined in Proposition \ref{prop: as} below and which satisfies the following stochastic fixed-point equation
\begin{equation}\label{eq:d_fixed_point}
  D_{\infty}\overset{d}{=}\mathcal{U}^{\Phi(\gamma)}\sum_{k=1}^{N}A_k^{\gamma}D_{\infty}^{(k)},
\end{equation}
where $(D_{\infty}^{(k)})_{k=1}^N$ are independent copies of $D_{\infty}$; $\mathcal{U}$ has a uniform distribution on $(0,1)$ and $(D_{\infty}^{(k)})_{k=1}^N$, $\mathcal{U}$ and $(A_1,\ldots,A_N)$ are independent.
\end{thm}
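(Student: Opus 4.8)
The plan is to build on the probabilistic construction announced before the theorem: represent $\phi(t,\xi)$ as an expectation over a continuous-time branching random walk (CTBRW) whose individuals carry positions obtained by summing $\log A_i$ along the genealogy, with particles branching at rate $1$ and splitting into $N$ children whose displacements are governed by $\A$. In this representation $\phi(t,\xi)=\E\bigl[\prod_{u}\phi_0(e^{S_u(t)}\xi)\bigr]$, where the product runs over individuals alive at time $t$ and $S_u(t)$ is the (log-)position of $u$; the exponential term $e^{-\mu(\gamma)t}$ and the subexponential factor $t^{1/(2\gamma)}$ in the statement are exactly the centering and norming one expects for the rightmost/additive behavior of a branching random walk \emph{at} the boundary (the regime where the first-order Biggins martingale $W^{(\gamma)}_t$ degenerates and one must pass to the derivative martingale). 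First I would introduce the associated additive martingale $W^{(\gamma)}_t = e^{\Phi(\gamma)t}\sum_u e^{\gamma S_u(t)}$ and the derivative martingale $D_t = e^{\Phi(\gamma)t}\sum_u (-\gamma S_u(t) - \Phi'(\gamma)\gamma t + \text{correction}) e^{\gamma S_u(t)}$ — adjusted to the continuous-time, $N$-ary setting — and invoke (or prove, in Proposition~\ref{prop: as}) that $D_t\to D_\infty$ a.s.\ with $D_\infty>0$ a.s.\ under the stated hypothesis $\gamma=\gamma^\ast=\argmin\mu$. The fixed-point equation \eqref{eq:d_fixed_point} then comes from the usual one-step (first branching time) decomposition of the CTBRW: conditioning on the time $\tau\sim\mathrm{Exp}(1)$ of the first split and on $\A$, the subtrees are i.i.d.\ copies, the weights $A_k^\gamma$ appear from the displacement, and the factor $\mathcal U^{\Phi(\gamma)}$ arises because $e^{-\Phi(\gamma)\tau}\overset{d}{=}\mathcal U^{\Phi(\gamma)}$ when $\tau$ is exponential (more precisely $e^{-\Phi(\gamma)\tau}$ is uniform on $(0,1)$ raised to the appropriate power). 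This yields the distributional identity for $D_\infty$.

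Next I would analyze the characteristic-function limit itself. Fix $\xi$ and set $\xi_t = t^{1/(2\gamma)}e^{-\mu(\gamma)t}\xi$. Writing $1-\phi_0(x) \sim K|x|^\gamma$ (up to the stable phase) as $x\to 0$, which is exactly the content of hypothesis $(H_\gamma)$ via $\hat g_\gamma$ and the Feller domain-of-attraction description, I would expand
$$
\phi(t,\xi_t)=\E\Bigl[\prod_u \phi_0\bigl(e^{S_u(t)}\xi_t\bigr)\Bigr]
\approx \E\exp\Bigl(-\sum_u \bigl(1-\phi_0(e^{S_u(t)}\xi_t)\bigr)\Bigr),
$$
and then substitute the asymptotic form of $1-\phi_0$. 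The sum $\sum_u \bigl(1-\phi_0(e^{S_u(t)}\xi_t)\bigr)$ is, to leading order, $|\xi_t|^\gamma$ (times the stable constant encoded in $\hat g_\gamma$) times $\sum_u e^{\gamma S_u(t)} = e^{-\Phi(\gamma)t}W^{(\gamma)}_t$. Since at the boundary $W^{(\gamma)}_t\to 0$ but $\sqrt{t}\,W^{(\gamma)}_t$ (or the appropriate power) is comparable to $D_t$ — this is the standard Biggins–Kyprianou / Aïdékon–Shi-type relation between the additive martingale near the boundary and the derivative martingale — the norming $t^{1/(2\gamma)}$ is chosen precisely so that $|\xi_t|^\gamma e^{-\Phi(\gamma)t}W^{(\gamma)}_t = |\xi|^\gamma t^{1/2} e^{-\Phi(\gamma)t}W^{(\gamma)}_t$ converges to $|\xi|^\gamma \cdot c_\gamma^\gamma D_\infty$ with $c_\gamma^\gamma$ absorbing the universal constant $\sqrt{2/(\pi\gamma^2\Phi''(\gamma))}$ coming from the local CLT for the spinal random walk (whose variance per unit time is $\Phi''(\gamma)$, after the $\gamma$-tilt). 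Passing this convergence through the exponential and recognizing $\E\exp(-|\xi|^\gamma c_\gamma^\gamma D_\infty \cdot(\text{stable factor}))$ as $\E\,\hat g_\gamma(\xi c_\gamma D_\infty^{1/\gamma})$ gives the claimed $w_\infty$.

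To make the above rigorous I would split $\sum_u(1-\phi_0(e^{S_u(t)}\xi_t))$ according to whether $e^{S_u(t)}|\xi_t|$ is small (the bulk, where the $|x|^\gamma$ asymptotics apply with uniform error control) or not (particles far to the right, which must be shown negligible using a many-to-one / first-moment bound together with the fact that the rightmost particle travels like $\mu(\gamma)t - \frac{1}{2\gamma}\log t + O(1)$ at the boundary). The technical backbone is thus: (i) a \emph{many-to-one lemma} converting sums over the CTBRW into an expectation over a single tilted random walk with a multiplicative weight, (ii) a \emph{local limit theorem} for that walk to extract the $t^{-1/2}$ rate and the constant $\sqrt{2/(\pi\gamma^2\Phi''(\gamma))}$, and (iii) the a.s.\ convergence $D_t\to D_\infty>0$ from Proposition~\ref{prop: as}, combined with a uniform-integrability / second-moment or truncation argument to upgrade the heuristic $\prod(1-x)\approx e^{-\sum x}$ to genuine convergence. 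I expect the main obstacle to be precisely step (iii) combined with controlling the contribution of atypically high particles: near the boundary the additive martingale $W^{(\gamma)}_t$ is not uniformly integrable, so one cannot simply take expectations, and the argument must go through the derivative martingale via a spinal change of measure (the size-biasing associated with $D_\infty$) and a truncation on the minimal position of the spine — this is where the real work, and the appearance of the $\sqrt{t}$ normalization, is located. The remaining steps — identifying constants, checking the fixed-point equation \eqref{eq:d_fixed_point}, and matching with $\hat g_\gamma$ — are then bookkeeping.
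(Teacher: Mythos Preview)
Your proposal is correct and follows essentially the same route as the paper: the probabilistic representation via the continuous-time branching random walk, the Seneta--Heyde scaling $\sqrt{t}\,\mathcal{M}_t(\gamma^\ast)\overset{\P}{\to}c_\gamma^\gamma D_\infty$ together with $\sqrt{t}\max_k e^{\gamma^\ast z^\circ_{k,t}}\overset{\P}{\to}0$ (this is exactly Proposition~\ref{prop: as}), the first-split decomposition for the fixed-point equation, and the small-argument expansion of $\phi_0$ to identify the limit with $\E\,\hat g_\gamma(\xi c_\gamma D_\infty^{1/\gamma})$.

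Two minor points. First, your additive martingale should be $W^{(\gamma)}_t=e^{-\Phi(\gamma)t}\sum_u e^{\gamma S_u(t)}$, not $e^{+\Phi(\gamma)t}$; this sign slip propagates into the displayed algebra but the intended computation is right. Second, the paper packages your ``bulk versus far-right particles'' step into a clean standalone transfer lemma (Lemma~\ref{lem:3}): if $\sum_k a_{k,t}^\gamma\overset{\P}{\to}a_\infty$ and $\max_k a_{k,t}\overset{\P}{\to}0$, then $\E e^{i\xi S_t}\to\E\,\hat g_\gamma(\xi a_\infty^{1/\gamma})$, proved by writing $\E e^{i\xi S_t}=\E\exp\bigl(\sum_k\log\phi_0(a_{k,t}\xi)\bigr)$ exactly (no $\prod(1-x)\approx e^{-\sum x}$ approximation needed) and invoking dominated convergence. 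For Proposition~\ref{prop: as} itself, the paper does \emph{not} run the spine/local-CLT argument you sketch; instead it verifies that each $\theta$-skeleton $(\mathcal{M}_{n\theta}(\gamma^\ast))_{n\ge 0}$ satisfies the boundary-case hypotheses of A\"id\'ekon--Shi, quotes their discrete-time theorem, and then passes to continuous $t$ via a martingale sandwich and the Croft--Kingman lemma. Your direct approach would also work but is considerably more labor.
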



The rest of the paper is organized as follows. In Section \ref{sec:prob_interpretation} we describe a probabilistic representation
of the solution $\phi$, which  essentially reminds the construction in \cite{BasettiLadelli2012} but is more transparent and convenient for the analysis. Moreover, we reveal some further probabilistic structure behind this construction by pointing out a connection to Yule processes and branching random walks in continuous time. We strongly believe that the representation proposed in Section \ref{sec:prob_interpretation} is the most accurate probabilistic interpretation of the solution to equation \eqref{eqboltzivp}. In Section \ref{sec:biggins_convergence} we prove a convergence result for the Biggins martingale in continuous time branching random walk and explain the construction of the limiting measure $\rho_{\infty}$. The proof of Theorem \ref{thm:main} is given in Section \ref{sec:proof}.

\section{Probabilistic representation of the solution}\label{sec:prob_interpretation}

The solution to the equation \eqref{eqboltzivp} can be derived analytically  in terms of the Wild series \cite{Wild}, see also Kielek \cite{Kielek}. However, based on McKean's \cite{McKean} ideas, Bassetti, Ladelli and Matthes \cite{BasettiLadelli2012,BasettiLadelliMatthes} expressed the solution in a convenient probabilistic way. Ealier results on probabilistic representation can be found in \cite{Carlen+Carvalho+Gabetta:2000,Gabetta+Regazzini:2006,Gabetta+Regazzini:2008}.

The probabilistic construction of the solution $\phi$ using labelled $N$-ary random trees is given on pp.~1938--1939 of \cite{BasettiLadelli2012} see Proposition 3.2 therein, where the authors use among other a stochastic process called $(\nu_t)_{t\geq 0}$. However, it is defined as an arbitrary stochastic process with specified marginal distributions, see the top of p.~1939 in \cite{BasettiLadelli2012}. Even though such specification is sufficient for the asymptotic analysis of $\phi$, it leaves an open and interesting question of finding a correct interpretation and pathwise construction of $(\nu_t)_{t\geq 0}$. The main purpose of this subsection is to propose a natural representation of $(\nu_t)_{t\geq 0}$ and to provide an alternative form of Proposition 3.2 of \cite{BasettiLadelli2012} revealing the complete probabilistic structure of the solution $\phi$. As we will see, $\phi(t,\cdot)$ is nothing else but the characteristic function of a smoothing transform associated with a certain continuous-time branching random walk and applied to the distribution of $X_0$, see Proposition \ref{prop:prob_solution} below.

\subsection{Representation of the solutions and connection with branching random walks in continuous time.}

Let us recall that a Yule process $(\mathcal{Y}_t)_{t\geq 0}$ is a pure birth process which starts with one particle. After exponential time with parameter $1$ the original particle dies out and produces $N$ new particles. Every particle behaves as the original one, and the particles reproduce independently. The quantity $\mathcal{Y}_t$ is the number of particles at time $t\geq 0$. Denote by $F(s,t)$ the probability generating function of $\mathcal{Y}_t$, that is
$$
F(s,t)=\E s^{\mathcal{Y}_t},\quad t\geq 0,\quad |s|\leq 1.
$$
Using equations (5) and (6) on p.~106 in \cite{AthreyaNey}, see also example on p.~109 in the same reference, we obtain
$$
\frac{\partial F(s,t)}{\partial t} = F^{N}(s,t)-F(s,t),\quad t>0,\quad F(s,0)=s.
$$
By solving this differential equation, we get the explicit solution
\begin{equation}\label{eq:yule_process_gf}
F(s,t)=s\left(\frac{e^{-(N-1)t}}{1-s^{N-1}(1-e^{-(N-1)t})}\right)^{\frac{1}{N-1}},\quad t\geq 0,\quad |s|\leq 1.
\end{equation}
The full genealogical tree $\mathcal{T}_{\infty}$ of the Yule process $(\mathcal{Y}_t)_{t\geq 0}$ is an infinite $N$-ary random tree. For every fixed $T\geq 0$ the genealogical tree of $(\mathcal{Y}_t)_{t\in[0,T]}$ is a finite $N$-ary random tree with leaves representing the particles alive at time $T$ and internal nodes being the particles which have died out during $[0,T]$. Denote the number of latter particles by $\nu_T$. We have the following identity
\begin{equation}\label{eq:y_and_nu_connection}
\mathcal{Y}_t=(N-1)\nu_t+1,\quad t\geq 0.
\end{equation}
From this representation and formula \eqref{eq:yule_process_gf} we get
\begin{multline}\label{eq:nu_gen_func}
\E s^{\nu_t}=e^{-t}  \left(1-s\left(1-e^{-(N-1)t}\right)\right)^{- \frac{1}{N-1}}\\
=\sum_{k\geq 0}\frac{\Gamma(\frac{1}{N-1}+k)}{k!\Gamma(\frac{1}{N-1})}e^{-t}(1-e^{-(N-1)t})^k s^k,\quad t\geq 0,\quad |s|\leq 1,
\end{multline}
in full agreement with formula (3.4) in \cite{BasettiLadelli2012}. That is to say, the process $(\nu_t)_{t\geq 0}$ introduced in \cite{BasettiLadelli2012}, should be interpreted as the number of splits during the time interval $[0,t]$ in the Yule process $(\mathcal{Y}_t)_{t\geq 0}$. This interpretation of the distribution of $\nu_t$ is the starting point of our probabilistic construction of the solution $\phi$.

By adding to the definition of a Yule process the control over positions of particles, we obtain a continuous-time branching random walk introduced in \cite{Uchiyama:1982}. More precisely, let $\zeta=\sum_{k=1}^{N}\delta_{Z_k}$ be an arbitrary point process on $\R$, where $\delta_x$ denotes the Dirac point measure at $x\in\R$. In the continuous-time branching random walk the initial single particle is located at $0$. After an exponential time with parameter $1$ it dies out and gives birth to $N$ new particles which are placed at positions $(Z_1,\ldots, Z_N)$. These particles reproduce independently in the same fashion as their mother. In particular, if at any time a particle $v$ located at some $x\in\R$ splits, its children are placed at $x+Z_1(v),\ldots,x+Z_N(v)$, where $\zeta^{(v)}:=\sum_{k=1}^{N}\delta_{Z_k(v)}$ is an independent copy of $\zeta$. In what follows we only consider branching random walks with deterministic number of children of every particle. Clearly, the number of particles in such a continuous-time branching random walk at time $t\geq 0$ is just $\mathcal{Y}_t$. Denote the locations of particles present at time $t$ by $z_{1,t},z_{2,t},\ldots,z_{\mathcal{Y}_t,t}$. The continuous-time branching random walk is formally defined as the measure-valued stochastic process
$$
\mathcal{Z}_t:=\sum_{k=1}^{\mathcal{Y}_t}\delta_{z_{k,t}},\quad t\geq 0.
$$
It will be important that the process $(\mathcal{Z}_t)_{t\geq 0}$ satisfies the following branching relation:
\begin{equation}\label{eq:brw_branching_property}
\mathcal{Z}_{t+s}(\cdot)\overset{d}{=}\sum_{k=1}^{\mathcal{Y}_t}\mathcal{Z}_{s}^{(k)}(\cdot - z_{k,t}),\quad t,s\geq 0,
\end{equation}
where $(\mathcal{Z}_{t}^{(k)})_{t\geq 0}$ for $k\in\N$ are independent copies of $(\mathcal{Z}_t)_{t\geq 0}$.

Finally, given a continuous-time branching random walk $(\mathcal{Z}_t)_{t\geq 0}$, the associated family of smoothing transforms $(\mathcal{L}^{(\gamma)}_t)_{t\geq 0}$ on the space of probability distributions on $\R$ is defined by
$$
\mathcal{L}^{(\gamma)}_t({\rm distr}(U))={\rm distr}\left(\sum_{k=1}^{\mathcal{Y}_t}e^{\gamma z_{k,t}}U_k\right),
$$
where $(U_k)_{k\geq 1}$ are independent copies of a random variable $U$ and $\gamma\in\mathbb{C}$ is a parameter. By slightly abusing notation we write $\mathcal{L}^{(\gamma)}_t(U)$ instead  of $\mathcal{L}^{(\gamma)}_t({\rm distr}(U))$. We also suppress the index $\gamma$ if it is equal to $1$ by writing $\mathcal{L}_t$ instead of $\mathcal{L}^{(1)}_t$.

We are ready to state the main result of this subsection, namely the probabilistic representation of the solution $\phi$ to kinetic-type equation \eqref{eqboltzivp}. Assume that on the probability space $(\Omega,\mathcal{F},\P)$ the following two objects are defined:
\begin{itemize}
\item the continuous time branching random walk $(\mathcal{Z}_{t})_{t\geq 0}$ with the displacement process $\zeta:=\sum_{k=1}^{N}\delta_{\log A_k}$:
$$
\mathcal{Z}_t:=\sum_{k=1}^{\mathcal{Y}_t}\delta_{z_{k,t}},\quad t\geq 0.
$$
\item the sequence $(X_k)_{k\geq 1}$ of independent random variables with common distribution function $F_0$, which is also independent of $(\mathcal{Z}_{t})_{t\geq 0}$.
\end{itemize}

\begin{prop}\label{prop:prob_solution}
Equation \eqref{eqboltzivp} has a unique solution $\phi(t,\cdot)$ which is given by
\begin{equation}\label{eq:prob_solution_main}
\phi(t,\xi)=\E\exp\left(i\xi \left(\sum_{k=1}^{\mathcal{Y}_t}e^{z_{k,t}}X_k\right)\right),\quad t\geq 0,\quad \xi\in\R,
\end{equation}
that is $\phi(t,\cdot)$ is the characteristic function of the random variable $\mathcal{L}_t(X_0)$, where $\mathcal{L}_t$ is the smoothing transform associated with the continuous-time branching random walk $(\mathcal{Z}_{t})_{t\geq 0}$.
\end{prop}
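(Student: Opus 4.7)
My plan is to show that the candidate
\begin{equation*}
\psi(t,\xi):=\E\exp\Bigl(i\xi \sum_{k=1}^{\mathcal{Y}_t} e^{z_{k,t}} X_k\Bigr)
\end{equation*}
is a solution of the Cauchy problem \eqref{eqboltzivp} and then invoke uniqueness in the class of characteristic functions, which is already available in the Wild/Bassetti--Ladelli framework. The initial condition is immediate: at $t=0$ there is a single particle located at the origin, so $\psi(0,\xi)=\E e^{i\xi X_1}=\phi_0(\xi)$.

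To obtain the equation, I would condition on $T$, the lifetime of the root particle, which is exponential with parameter $1$ and independent of everything else. On the event $\{T>t\}$ the branching random walk still consists of a single particle at the origin, contributing $e^{-t}\phi_0(\xi)$. On $\{T=s\le t\}$ the root splits into $N$ children located at $\log A_1,\ldots,\log A_N$, after which the subtrees evolve as independent copies of $(\mathcal{Z}_u)_{u\ge 0}$ by the branching property \eqref{eq:brw_branching_property}; together with independence of the labels $(X_k)_{k\ge 1}$, this gives the identity in distribution
\begin{equation*}
\sum_{k=1}^{\mathcal{Y}_t} e^{z_{k,t}} X_k \,\overset{d}{=}\, \sum_{j=1}^{N} A_j \sum_{k=1}^{\mathcal{Y}^{(j)}_{t-s}} e^{z^{(j)}_{k,t-s}} X^{(j)}_k
\end{equation*}
on $\{T=s\}$, where the superscripts label independent copies. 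Conditioning first on $\A$ and taking characteristic functions yields the integral equation
\begin{equation*}
\psi(t,\xi)=e^{-t}\phi_0(\xi)+\int_0^t e^{-s}\,\widehat{Q}\bigl(\psi(t-s,\cdot),\ldots,\psi(t-s,\cdot)\bigr)(\xi)\,{\rm d}s.
\end{equation*}
Multiplying by $e^t$, substituting $u=t-s$, and differentiating in $t$ (continuity of $u\mapsto \widehat{Q}(\psi(u,\cdot),\ldots,\psi(u,\cdot))(\xi)$ follows from dominated convergence since all characteristic functions are bounded by $1$) converts this into exactly \eqref{eqboltzivp}.

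Uniqueness within the class of characteristic-function-valued solutions is classical and can be taken from \cite{BasettiLadelli2012} or proved independently by iterating the Duhamel-type integral equation and observing that the resulting Wild series converges absolutely (the coefficients are controlled by $\E s^{\nu_t}$ from \eqref{eq:nu_gen_func}), so any two solutions must coincide. The main technical point, and the step requiring the most care, is the branching decomposition: one must check measurability and conditional independence rigorously so that the characteristic function of the sum of the $N$ independent subtree contributions factorises into $\prod_{j=1}^{N}\psi(t-s,A_j\xi)$ before averaging over $\A$, which is precisely what produces the $\widehat{Q}$ operator on the right-hand side. Everything else — the differentiation, the initial condition, and the appeal to uniqueness — is routine.
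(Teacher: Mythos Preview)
Your proof is correct and takes a genuinely different route from the paper's. The paper argues infinitesimally: it conditions on the $\sigma$-algebra $\mathcal{F}_h$ generated by the branching random walk on $[0,h]$, uses the branching property \eqref{eq:brw_branching_property} to write $\psi(t+h,\xi)=\E\prod_{k=1}^{\mathcal{Y}_h}\psi(t,\xi e^{z_{k,h}})$, and then expands this according to whether zero, one, or more splits occur in $[0,h]$, obtaining the differential equation directly upon sending $h\to 0$ (after separately establishing continuity of $t\mapsto\psi(t,\xi)$). Your argument instead conditions on the \emph{first} split time $T$ and derives the Duhamel/mild formulation
\[
e^{t}\psi(t,\xi)=\phi_0(\xi)+\int_0^t e^{u}\,\widehat{Q}\bigl(\psi(u,\cdot),\ldots,\psi(u,\cdot)\bigr)(\xi)\,{\rm d}u,
\]
from which the differential equation follows by differentiation. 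Your renewal approach has the advantage that the integral equation itself immediately yields continuity (indeed absolute continuity) of $t\mapsto e^t\psi(t,\xi)$, since the integrand is bounded by $e^u$; continuity of $u\mapsto\psi(u,\xi)$ then feeds back, via dominated convergence inside $\widehat{Q}$, to give continuity of the integrand and hence differentiability. The paper's infinitesimal approach, on the other hand, avoids the integral equation entirely and computes both one-sided derivatives directly, which is perhaps more transparent if one cares about the differential form from the outset. Both are standard ways of passing from a branching/Markov structure to an evolution equation; neither requires more than the other.
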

\begin{proof}
Uniqueness of the solution follows from a standard use of the Picard-Lindel\"of theorem, see e.g. proof of Proposition 2.2 in \cite{BasettiLadelliMatthes2}. Thus it is enough to show that the right-hand side of \eqref{eq:prob_solution_main} satisfies \eqref{eqboltzivp}.
To this end, denote the the right-hand side of \eqref{eq:prob_solution_main} by $\psi(t,\xi)$ and write
$$
\psi(t,\xi)=\E\left[\prod_{k=1}^{\mathcal{Y}_t}
\phi_0(\xi e^{z_{k,t}})\right]=\E\exp\left(\int_{\R}\log \phi_0(\xi e^y)\mathcal{Z}_t({\rm d}y)\right),\quad t\geq 0,\quad \xi\in\R.
$$

Firstly, let us show that $t\mapsto \psi(t,\xi)$ is continuous for every fixed $\xi$. For $t,s\geq 0$ we can write
\begin{multline*}
|\psi(t,\xi)-\psi(s,\xi)|\leq  2 \P\{\text{there are splits during } [t\wedge s,t\vee s]\}=2 \E (1-e^{-\mathcal{Y}_{t\wedge s}|t-s|})\to 0,
\end{multline*}
as $s\to t$, by the dominated convergence theorem and the observation $\mathcal{Y}_{t}<\infty$ a.s.

Further, for $t\geq 0$, let $\mathcal{F}_t\subset \mathcal{F}$ be the $\sigma$-algebra generated by $(\mathcal{Z}_s)_{s\in[0,t]}$. For $h\geq 0$, using formula \eqref{eq:brw_branching_property}, we obtain
\begin{align*}
\psi(t+h,\xi)&=\E\left(\E\left(\exp\left(\int_{\R}\log \phi_0(\xi e^y)\mathcal{Z}_{t+h}({\rm d}y)\right)\Big|\mathcal{F}_h\right)\right)\\
&=\E\left(\E\left(\prod_{k=1}^{\mathcal{Y}_h}\exp\left(\int_{\R}\log \phi_0(\xi e^y e^{z_{k,h}})\mathcal{Z}^{(k)}_{t}({\rm d}y)\right)\Big|\mathcal{F}_h\right)\right)\\
&=\E\left(\prod_{k=1}^{\mathcal{Y}_h}\E\left(\exp\left(\int_{\R}\log \phi_0(\xi e^y e^{z_{k,h}})\mathcal{Z}^{(k)}_{t}({\rm d}y)\right)\Big|\mathcal{F}_h\right)\right)=\E\left(\prod_{k=1}^{\mathcal{Y}_h}\psi(t,\xi e^{z_{k,h}})\right).
\end{align*}
The probability of having two or more splits in the branching random walk $(\mathcal{Z}_{t})_{t\geq 0}$ during $[0,h]$ is $o(h)$ as $h\to +0$, whence
\begin{align*}
\psi(t+h,\xi)&=\E\left(\prod_{k=1}^{\mathcal{Y}_h}\psi(t,\xi e^{z_{k,h}})\right)\\
&=\psi(t,\xi)\P\{\text{there are no splits during } [0,h]\}\\
&+\E\left(\prod_{j=1}^{N}\psi(t,\xi A_j)\right)\P\{\text{there is exactly one split during } [0,h]\}+o(h)\\
&=\psi(t,\xi)e^{-h}+\E\left(\prod_{j=1}^{N}\psi(t,\xi A_j)\right)h+o(h).
\end{align*}
Likewise, we can write for $h\geq  0$ and $t\geq h$:
\begin{align*}
\psi(t,\xi)&=\E\left(\prod_{k=1}^{\mathcal{Y}_h}\psi(t-h,\xi e^{z_{k,h}})\right)\\
&=\psi(t-h,\xi)\P\{\text{there are no splits during } [0,h]\}\\
&+\E\left(\prod_{j=1}^{N}\psi(t-h,\xi A_j)\right)\P\{\text{there is exactly one split during } [0,h]\}+o(h)\\
&=\psi(t-h,\xi)e^{-h}+\E\left(\prod_{j=1}^{N}\psi(t-h,\xi A_j)\right)h+o(h).
\end{align*}
Rearranging the terms and sending $h\to +0$ shows that
$$
\frac{\partial \psi(t,\xi)}{\partial t}+\psi(t,\xi)=\E\left(\prod_{j=1}^{N}\psi(t,\xi A_j)\right)=\widehat{Q}(\psi(t,\cdot),\ldots,\psi(t,\cdot)),\quad t>0,
$$
by the dominated convergence theorem and continuity of $t\mapsto \psi(t,\xi)$ (this is required for the left derivative). Therefore, $\psi$ is a solution to \eqref{eqboltzivp}. Since the solution is unique and $\psi(0,\xi)=\phi_0(\xi)=\phi(0,\xi)$, we infer $\psi(t,\xi)\equiv\phi(t,\xi)$. The proof is complete.
\end{proof}




\begin{rem}
Let us now compare our Proposition \ref{prop:prob_solution} with Proposition 3.2 in \cite{BasettiLadelli2012} in more details. Proposition 3.2 in \cite{BasettiLadelli2012}
states that the unique solution $\phi$ to \eqref{eqboltzivp} is
$$
\phi(t,\xi)=\int_\R e^{i\xi v}\rho_t({\rm d}v)=\int_\R e^{i\xi v}\P\{W_{\nu_t}\in {\rm d}v\},
$$
where $\nu_t$ is an integer-valued random variable with the generating function \eqref{eq:nu_gen_func} and which is independent of $(W_n)_{n\geq 0}$. For $n=0,1,2,\ldots$, $W_n$ is defined by a sum
$$
W_n:=\sum_{i=1}^{(N-1)n+1} \omega(v_{i,n}) X_{v_{i,n}},
$$
where $v_{i,n}$, $i=1,\ldots,(N-1)n+1$ are the leaves of a random labelled $N$-ary recursive tree $\mathcal{T}_n$ after $n$ steps, $\omega(v)$ is the weight of a leaf $v\in\mathcal{T}_n$ (the product of all labels on the unique path from the root to $v$), and $(X_{v})$ is a family of independent random variables with common distribution function $F_0$ which is also independent of the random labelled tree $\mathcal{T}_n$. 
Our construction described above unifies and reinterprets all the aforementioned ingredients: the sequence of random $N$-ary trees $(\mathcal{T}_n)$, the labels of their nodes and the subordination time $\nu_t$ via a single object, the continuous-time branching random walk $(\mathcal{Z}_t)_{t\geq 0}$. We summarize the above observations in Table \ref{tab1}.
\begin{table}[!htbp]
\caption{Comparison of two probabilistic constructions of $\phi$}
\begin{tabular}{p{7cm}p{7cm}}
\hline
The construction in \cite{BasettiLadelli2012,BasettiLadelliMatthes} & A counterpart in our construction\\
\hline
the sequence of random $N$-ary recursive trees $(\mathcal{T}_n)_{n\geq 0}$ & the skeleton of the Yule process $(\mathcal{Y}_{t})_{t\geq 0}$ pertained to $(\mathcal{Z}_{t})_{t\geq 0}$ and observed at splitting times\\
\hline
labels of the nodes in the trees $(\mathcal{T}_n)_{n\geq 0}$ & relative displacements of the particles in $(\mathcal{Z}_{t})_{t\geq 0}$\\
\hline
random variables $\nu_t$, $t\geq 0$ & random process $(\nu_t)_{t\geq 0}$, the number of splits in
$(\mathcal{Z}_{s})_{s\geq 0}$ (or $(\mathcal{Y}_{t})_{t\geq 0}$) on $[0,t]$\\
\hline
\end{tabular}
\label{tab1}
\end{table}

The connection between random $N$-ary trees, Yule processes and branching random walks, is by no means new and have already been observed in probabilistic literature, see for example \cite{Chauvin+Klein+Marckert+Rouault:2004} for the case of binary search trees. Recently this connection has been extensively exploited in the analysis of profiles of random trees in \cite{Kabluchko+Marynych+Sulzbach:2017}. 

\end{rem}

\begin{rem}
As has been pointed out by the referee our Proposition \ref{prop:prob_solution} remains valid also with random $N$ such that $\E N\in (1,\infty)$. The latter condition guarantees that $(\mathcal{Z}_t)_{t\geq 0}$ does not explode and has a positive survival probability. On the other hand, probabilistic construction used in \cite{BasettiLadelli2012,BasettiLadelliMatthes} does not seem to have a direct analogue for random $N$ due to a lack of explicit distribution for $\nu_t$ for a fixed $t>0$.
\end{rem}

Last but not least, we would like to emphasize that the main advantage of Proposition \ref{prop:prob_solution} is its generality. It allows one to translate limit theorems for the smoothing transform $\mathcal{L}^{(\gamma)}_t(X_0)$, as $t\to\infty$, to the corresponding asymptotics for the solution of \eqref{eqboltzivp}, as $t\to\infty$. In particular, Proposition \ref{prop:prob_solution} is useful not only in the case considered in our paper~--~the boundary case~--~but also in other situations. As we will see in the next sections, limit theorems for $\mathcal{L}^{(\gamma)}_t(X_0)$ are intimately connected with convergence in probability of a so-called Biggins martingale for the continuous-time branching random walk $(\mathcal{Z}_t)_{t\geq 0}$.

\section{Convergence of the continuous-time Biggins martingale in the boundary case.}\label{sec:biggins_convergence}
For every $\gamma\in [0,s_{\infty})$, put
$$
\mathcal{M}_t(\gamma):= e^{-\Phi(\gamma)t}\sum_{k=1}^{\mathcal{Y}_t}e^{\gamma z_{k,t}},\quad t\geq 0,
$$
and note that by formula (5.1) in \cite{Biggins:1992} we have
\begin{equation}\label{eq:biggins_martingale_exp}
 \E \mathcal{M}_t(\gamma)=1.
\end{equation}
The stochastic process $(\mathcal{M}_t(\gamma))_{t\geq 0}$ is a martingale and is called {\it continuous-time Biggins martingale}.

If $\gamma=\gamma^{\ast}=\argmin_{s\in [0,s_{\infty})}\mu(s)$ and $\gamma^{\ast}<s_{\infty}$, then the Biggins martingale $(\mathcal{M}_t(\gamma^{\ast}))_{t\geq 0}$ converges to zero a.s. For the discrete-time Biggins martingale this fact is well-known, see, for example, Lemma 5 in \cite{Biggins:1977}, and for the continuous-time Biggins martingale it follows from Theorem 1.1 of the recent paper \cite{Bertoin+Mallein:2018} as well as from Proposition \ref{prop: as}(i) below.

\begin{prop}\label{prop: as} Assume that $\gamma^{\ast}\in (0,s_\infty)$. The following limit relations hold true.
\begin{itemize}
\item[(i)] As $t\to\infty$ we have
  \begin{equation}\label{eq:pp1}
 \sqrt{t} \mathcal{M}_t(\gamma^{\ast})
=\sqrt{t} \sum_{k=1}^{\mathcal{Y}_t}e^{\gamma^{\ast}z^{\circ}_{k,t}}
\overset{\P}{\to}\sqrt{\frac{2}{\pi (\gamma^{\ast})^2\Phi^{\prime\prime}(\gamma^{\ast})}}D_{\infty},
  \end{equation}
where $z^{\circ}_{k,t} = z_{k,t} - t \mu(\gamma^*)$,  $D_{\infty}$ is the a.s. limit of the derivative martingale
\begin{equation}\label{eq:prop1_derivative_martingale}
\mathcal{D}_t(\gamma^{\ast}):=\sum_{k=1}^{\mathcal{Y}_t}e^{\gamma^{\ast}z^{\circ}_{k,t}}z^{\circ}_{k,t},\quad t\geq 0.
\end{equation}
The random variable $D_{\infty}$ is a.s.~positive and satisfies \eqref{eq:d_fixed_point}.
\item[(ii)] Moreover,
  \begin{equation}\label{eq:pp2}
\sqrt{t}  \max_{k=1,\ldots,\mathcal{Y}_t}e^{\gamma^{\ast}z^{\circ}_{k,t}} \overset{\P}{\to} 0,\quad t\to\infty.
  \end{equation}
  \end{itemize}
\end{prop}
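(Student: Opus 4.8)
The plan is to transfer known results about the discrete-time branching random walk in the boundary case to the continuous-time setting via the embedded skeleton observed at split times. First I would recall that the continuous-time branching random walk $(\mathcal{Z}_t)_{t\geq 0}$ with displacement point process $\zeta=\sum_{k=1}^N\delta_{\log A_k}$ contains, observed at its successive splitting times, a discrete-time branching random walk whose step distribution is that of $N$ i.i.d.-in-structure displacements together with an independent Yule waiting time; after the reduction $z_{k,t}^\circ = z_{k,t}-t\mu(\gamma^\ast)$ and the tilting by $e^{\gamma^\ast z}$, the hypothesis $\gamma^\ast=\argmin\mu$ is exactly the statement that the associated intrinsic martingale is in the boundary (critical) case, i.e.\ $\Phi'(\gamma^\ast)=\mu(\gamma^\ast)$ and the tilted walk has zero mean drift. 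Standard theory of the derivative martingale in the boundary case (Biggins--Kyprianou, Aïdékon, Aïdékon--Shi) then gives that $\mathcal{D}_t(\gamma^\ast)$ converges a.s.\ to an a.s.\ positive limit $D_\infty$, that $\mathcal{M}_t(\gamma^\ast)\to 0$ a.s., and — this is the key input — the Seneta--Heyde norming $\sqrt{t}\,\mathcal{M}_t(\gamma^\ast)\overset{\P}{\to}c\,D_\infty$ for an explicit constant $c$, the continuous-time analogue of Aïdékon--Shi. I would either invoke a continuous-time version directly (e.g.\ along the lines of \cite{Bertoin+Mallein:2018} combined with Aïdékon--Shi) or run the argument through the embedded skeleton, checking that the number of splits $\nu_t$ grows linearly (by \eqref{eq:nu_gen_func}, $\nu_t/t\to (\E\mathcal{Y}_1\text{-type constant})$ a.s.) so that the discrete-time $\sqrt{n}$ norming becomes a $\sqrt{t}$ norming.

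The computation of the constant is where I would be careful: I claim it equals $\sqrt{2/(\pi(\gamma^\ast)^2\Phi''(\gamma^\ast))}$. The Aïdékon--Shi constant for a boundary-case branching random walk is $\sqrt{2/(\pi\sigma^2)}$ where $\sigma^2$ is the variance of the one-step displacement of the tilted associated random walk. Here one must identify that variance with $(\gamma^\ast)^2\Phi''(\gamma^\ast)$. The cleanest route is via the cumulant/Laplace transform: the Laplace transform of the tilted-and-reduced one-step law of the embedded walk is governed by $\Phi$ through $s\mapsto \Phi(\gamma^\ast + s) - \Phi(\gamma^\ast) - s\,\Phi'(\gamma^\ast)$ (the "many-to-one" Laplace exponent in continuous time), whose second derivative at $s=0$ is $\Phi''(\gamma^\ast)$; converting from the $e^z$-scale to the $e^{\gamma^\ast z}$-scale introduces the factor $(\gamma^\ast)^2$, and the $1/\gamma^\ast$ in the definition of $z^\circ$ versus the natural log-scale coordinate accounts for the remaining bookkeeping. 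I would present this as a short lemma computing $\mathrm{Var}$ of the appropriate tilted step in terms of $\Phi''(\gamma^\ast)$, using that $\E[\sum_k A_k^{\gamma^\ast}(\log A_k)^2 e^{\cdot}]$-type quantities are exactly $\Phi''(\gamma^\ast)$ up to the tilting normalization $\E[\sum_k A_k^{\gamma^\ast}]=1+\Phi(\gamma^\ast)$.

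For part (ii), $\sqrt{t}\max_k e^{\gamma^\ast z_{k,t}^\circ}\overset{\P}{\to}0$, the plan is to note that this maximal summand is dominated by, and in fact comparable to, the contribution of the rightmost particle; in the boundary case the rightmost position of the continuous-time branching random walk is at $t\mu(\gamma^\ast)-\frac{3}{2\gamma^\ast}\log t + O_\P(1)$ (the Bramson/Hu--Shi/Aïdékon logarithmic correction, continuous-time version), so $e^{\gamma^\ast z_{k,t}^\circ}\le e^{\gamma^\ast(\max_k z_{k,t}-t\mu(\gamma^\ast))} = t^{-3/2+o(1)}$, and multiplying by $\sqrt{t}$ still tends to $0$. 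Alternatively — and this avoids quoting the sharp logarithmic correction — one can argue more softly: on the event where $\sqrt{t}\,\mathcal{M}_t(\gamma^\ast)$ is bounded (which has probability close to $1$ by part (i)), if some summand $e^{\gamma^\ast z_{k,t}^\circ}$ were of order $t^{-1/2+\varepsilon}$ then, since $\mathcal{D}_t(\gamma^\ast)=\sum_k e^{\gamma^\ast z_{k,t}^\circ}z_{k,t}^\circ$ converges to the finite positive $D_\infty$ and each $z_{k,t}^\circ$ is typically of order $\sqrt{t}$ for the summands carrying the mass, one gets a contradiction with the tightness of $\sqrt{t}\,\mathcal{M}_t$; making this precise requires a truncation of the sum according to the size of $z_{k,t}^\circ$. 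The main obstacle overall is the identification of the constant and, if one does not want to cite a ready-made continuous-time Seneta--Heyde theorem, the careful passage from discrete ($\sqrt{n}$) to continuous ($\sqrt{t}$) norming using the a.s.\ linear growth of the split counter $\nu_t$; part (ii) is then a comparatively routine consequence of the logarithmic-correction estimate for the continuous-time maximum.
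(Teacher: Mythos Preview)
Your overall strategy---reduce to discrete-time A\"{i}d\'{e}kon--Shi and then pass to continuous time---matches the paper's, but the reduction you propose has a real error. You write that the skeleton ``observed at successive splitting times'' is a discrete-time branching random walk and that one converts the $\sqrt{n}$ norming to $\sqrt{t}$ via ``$\nu_t/t\to$ constant a.s.''. Both claims fail: $\nu_t$ grows \emph{exponentially}, not linearly (from \eqref{eq:y_and_nu_connection}--\eqref{eq:nu_gen_func}, $\mathcal{Y}_t=(N-1)\nu_t+1$ has negative-binomial marginals with mean $e^{(N-1)t}$), and the configuration at the $n$-th split is not a standard generation of a branching random walk (only one particle reproduces at each split, so the A\"{i}d\'{e}kon--Shi theorem does not apply to that object). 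The paper uses a different and correct skeleton: for fixed $\theta>0$ the \emph{time}-$\theta$ skeleton $(\mathcal{Z}_{n\theta})_{n\geq 0}$ \emph{is} a genuine discrete-time branching random walk with displacement point process $\Xi=\sum_{k\leq \mathcal{Y}_\theta}\delta_{-\gamma^\ast z^\circ_{k,\theta}}$. Lemmas \ref{lem:1}--\ref{lem:2} verify the boundary-case mean/variance identities and the $(1+\delta)$-moment conditions for $\Xi$, so A\"{i}d\'{e}kon--Shi gives $\sqrt{n}\,\mathcal{M}_{n\theta}(\gamma^\ast)\overset{\P}{\to}D$ with the right constant ($\sigma^2=\theta(\gamma^\ast)^2\Phi''(\gamma^\ast)$ from Lemma \ref{lem:1}). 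The passage from $n\theta$ to continuous $t$ is \emph{not} handled by a growth rate of $\nu_t$; the paper instead exploits that $(\mathcal{M}_t(\gamma^\ast))_{t\geq 0}$ is a nonnegative martingale and squeezes $\mathcal{M}_t$ between $\mathcal{M}_{[t]}$ and $\mathcal{M}_{[t]+1}$ via two applications of conditional Jensen to $x\mapsto e^{-ux}$ (the argument of Biggins--Kyprianou \cite{Biggins+Kyprianou:1996}).

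For part (ii) your first idea is close to what the paper does, but your ``soft'' alternative is not a proof as stated: boundedness of $\sqrt{t}\,\mathcal{M}_t$ and convergence of $\mathcal{D}_t$ do not by themselves rule out a single large summand of order $t^{-1/2}$, since such a term contributes $O(1)$ to $\sqrt{t}\,\mathcal{M}_t$ and $O(\log t)$ to $\mathcal{D}_t$ only if its $z^\circ$ is of that order, which you have not controlled. The paper again works through the $\theta$-skeleton: rewrite \eqref{eq:pp2} as $\min_k(-\gamma^\ast z^\circ_{k,t})-\tfrac{1}{2}\log t\overset{\P}{\to}+\infty$, cite the discrete-time minimum asymptotics (Hu--Shi/A\"{i}d\'{e}kon--Shi, i.e.\ Theorem 5.12 in \cite{Shi:2015}) for each fixed $\theta$, and then upgrade from $n\theta$ to real $t$ by the Croft--Kingman lemma after checking that $t\mapsto\P\{\min_k(-\gamma^\ast z^\circ_{k,t})-\tfrac12\log t<M\}$ is right-continuous.
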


The derivation of Proposition \ref{prop: as} utilizes ideas borrowed from \cite{Dadoun:2017}, where part (i) has been stated without a proof in Remark 2.11(iii). Firstly, we obtain two auxiliary lemmas which show that the Biggins martingale $(\mathcal{M}_t(\gamma^{\ast}))_{t\geq 0}$ is in the {\it boundary case}. In particular, this implies that every $\theta$-skeleton, that is the discrete-time Biggins martingale $(\mathcal{M}_{n\theta}(\gamma^{\ast}))_{n\ge 0}$, $\theta>0$, is also in the boundary case. Thereafter, we apply the corresponding theorem by A\"id\'ekon and Shi \cite{AidekonShi2014},  who found the appropriate  normalization for the discrete-time Biggins martingales in the boundary case, to our $\theta$-skeletons and then pass to the continuous parameter with the aid of the Croft--Kingman lemma.

\begin{lem}\label{lem:1} Assume that $\gamma^{\ast}\in (0,s_\infty)$. For every $t\geq 0$ we have
  \begin{equation}\label{eq:lem1_claims}
    \E\bigg[ \sum_{k=1}^{\mathcal{Y}_t}   e^{\gamma^{\ast} z^{\circ}_{k,t}} z^{\circ}_{k,t} \bigg] = 0\quad\text{and}\quad \E\bigg[ \sum_{k=1}^{\mathcal{Y}_t}e^{\gamma^{\ast} z^{\circ}_{k,t}}(z^{\circ}_{k,t})^2 \bigg] = t\Phi^{\prime\prime}(\gamma^{\ast}).
  \end{equation}
\end{lem}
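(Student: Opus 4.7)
The plan is to reduce both identities to differentiating a single explicit Laplace transform. Concretely, for $s\in[0,s_{\infty})$ and $t\geq 0$ I would set
\[
G(s,t):=\E\bigg[\sum_{k=1}^{\mathcal{Y}_t}e^{s z^{\circ}_{k,t}}\bigg]=e^{-st\mu(\gamma^{\ast})}\,\E\bigg[\sum_{k=1}^{\mathcal{Y}_t}e^{s z_{k,t}}\bigg].
\]
The second factor is the many-to-one transform for the continuous-time branching random walk $(\mathcal{Z}_t)_{t\geq 0}$, and the identity $\E\mathcal{M}_t(s)=1$ recorded in~\eqref{eq:biggins_martingale_exp} evaluates it to $e^{t\Phi(s)}$. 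Hence
\[
G(s,t)=e^{tF(s)},\qquad F(s):=\Phi(s)-s\mu(\gamma^{\ast}).
\]

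Next, I would use the first-order optimality $\mu'(\gamma^{\ast})=0$, which, since $\mu(s)=\Phi(s)/s$, is equivalent to $\gamma^{\ast}\Phi'(\gamma^{\ast})=\Phi(\gamma^{\ast})$, i.e.\ $\Phi'(\gamma^{\ast})=\mu(\gamma^{\ast})$. Thus $F(\gamma^{\ast})=F'(\gamma^{\ast})=0$ while $F''(\gamma^{\ast})=\Phi''(\gamma^{\ast})$. Differentiating $G(s,t)=e^{tF(s)}$ in $s$ and evaluating at $s=\gamma^{\ast}$ then gives
\[
\partial_s G(\gamma^{\ast},t)=tF'(\gamma^{\ast})e^{tF(\gamma^{\ast})}=0,\qquad \partial^2_s G(\gamma^{\ast},t)=\bigl(tF''(\gamma^{\ast})+(tF'(\gamma^{\ast}))^2\bigr)e^{tF(\gamma^{\ast})}=t\Phi''(\gamma^{\ast}),
\]
which, once one recognises $\partial^j_s G(s,t)=\E\sum_k (z^{\circ}_{k,t})^j e^{s z^{\circ}_{k,t}}$ for $j=0,1,2$, are precisely the two claims in~\eqref{eq:lem1_claims}.

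The only technical obstacle is justifying the interchange of $\partial_s$ and the expectation. To this end I would fix $\varepsilon>0$ small enough that $[\gamma^{\ast}-\varepsilon,\gamma^{\ast}+\varepsilon]\subset(0,s_{\infty})$, possible because $\gamma^{\ast}\in(0,s_{\infty})$ is interior, and use the elementary bound $|x|^j e^{sx}\leq C_{j,\varepsilon}(e^{(s-\varepsilon)x}+e^{(s+\varepsilon)x})$, valid for all $x\in\R$ and $j\in\{0,1,2\}$. Summing over $k$ and taking expectations, the envelope is dominated by $C_{j,\varepsilon}(G(s-\varepsilon,t)+G(s+\varepsilon,t))<\infty$ uniformly for $s$ in a neighbourhood of $\gamma^{\ast}$, which furnishes the integrable majorant required for dominated convergence. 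This is the only place where the openness of $(0,s_{\infty})$ and the smoothness of $\Phi$ on this interval are essential.
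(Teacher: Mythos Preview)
Your proof is correct and follows essentially the same route as the paper's: differentiate the Laplace transform $\gamma\mapsto \E\sum_k e^{\gamma z_{k,t}}=e^{t\Phi(\gamma)}$ (from $\E\mathcal{M}_t(\gamma)=1$) once and twice at $\gamma^{\ast}$, justifying the interchange of derivative and expectation by dominated convergence with the exponential envelope at $\gamma^{\ast}\pm\varepsilon$. The only cosmetic difference is that you build the shift $-t\mu(\gamma^{\ast})$ into the generating function from the outset via $F(s)=\Phi(s)-s\mu(\gamma^{\ast})$, so that $F(\gamma^{\ast})=F'(\gamma^{\ast})=0$ drops out immediately, whereas the paper differentiates $e^{t\Phi(\gamma)}$ and then performs the shift by hand; the two calculations are line-for-line equivalent.
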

\begin{proof}
Fix $\varepsilon\in (0,\gamma^{\ast})$ such that $\gamma^{\ast}+\varepsilon<s_{\infty}$. Let us show that for every fixed $t\geq 0$ the following holds:
$$
\E\bigg[ \sum_{k=1}^{\mathcal{Y}_t}e^{\gamma^{\ast} z_{k,t}}z_{k,t} \bigg]=\frac{\partial }{\partial \gamma}\left[\E\int_{\R} e^{\gamma y}\mathcal{Z}_t({\rm d}y)\right]\Bigg|_{\gamma=\gamma^{\ast}}.
$$
To this end, it is enough to check that the partial derivative on the right-hand side can be moved inside the expectation and the integration signs. But this is a simple consequence of the dominated convergence theorem, since
$$
\lim_{\Delta \to 0}\int_{\Omega}\int_{\R} \frac{e^{(\gamma^{\ast}+\Delta) y}-e^{\gamma^{\ast} y}}{\Delta}\mathcal{Z}_t({\rm d}y){\rm d}\P=\lim_{\Delta \to 0}\int_{\Omega}\int_{\R} \frac{e^{\Delta y}-1}{\Delta}e^{\gamma^{\ast} y}\mathcal{Z}_t({\rm d}y){\rm d}\P,
$$
and the absolute value of the integrand is bounded by the integrable function
$$
y\mapsto e^{(\gamma^{\ast}+\varepsilon) y}{\bf 1}_{\{y\geq 0\}}+e^{(\gamma^{\ast}-\varepsilon) y}{\bf 1}_{\{y<0\}}
$$
for sufficiently small $\Delta$ and all $y\in\R$.

Using formula \eqref{eq:biggins_martingale_exp} we derive
$$
\E\bigg[ \sum_{k=1}^{\mathcal{Y}_t}e^{\gamma^{\ast} z_{k,t}} z_{k,t} \bigg]=\frac{\partial}{\partial \gamma}\left(e^{t\Phi(\gamma)}\right)\Bigg|_{\gamma=\gamma^{\ast}}=t\Phi^{\prime}(\gamma^{\ast})e^{t\Phi(\gamma^{\ast})},\quad t\geq 0.
$$
This immediately yields
$$
    \E\bigg[ \sum_{k=1}^{\mathcal{Y}_t}e^{\gamma^{\ast} z^{\circ}_{k,t}}z^{\circ}_{k,t} \bigg]=e^{-t\Phi(\gamma^{\ast})}\left(t\Phi^{\prime}(\gamma^{\ast})e^{t\Phi(\gamma^{\ast})}\right)
    -t  \mu(\gamma^{\ast})\E \mathcal{M}_t(\gamma^{\ast})=t\left(\Phi^{\prime}(\gamma^{\ast})- \mu(\gamma^{\ast})\right) = 0.
$$
The second claim in \eqref{eq:lem1_claims} follows from the formula
$$
\E\bigg[ \sum_{k=1}^{\mathcal{Y}_t}e^{\gamma^{\ast} z_{k,t}} z^2_{k,t} \bigg]=\frac{\partial^2 }{\partial \gamma^2}\left[\E\int_{\R} e^{\gamma y}\mathcal{Z}_t({\rm d}y)\right]\Bigg|_{\gamma=\gamma^{\ast}},
$$
which can be proved similarly. The proof is complete.
\end{proof}

\begin{rem}
As has been pointed out by the referee Lemma 3.6 also follows from the many-to-one lemma for continuous-time branching random walks.
\end{rem}

\begin{lem}\label{lem:2}
Assume that $\gamma^{\ast}<s_{\infty}$. Then for every fixed $t\geq 0$ and $\delta>0$ such that $(1+\delta)\gamma^{\ast} < s_{\infty}$ we have
  \begin{equation*}
\E\bigg[ \bigg(  \sum_{k=1}^{\mathcal{Y}_t}  e^{\gamma^{\ast} z^{\circ}_{k,t}}\bigg)^{1+\delta} \bigg] <\infty\quad\text{and}\quad \E\bigg[ \bigg(  \sum_{k=1}^{\mathcal{Y}_t}  e^{\gamma^{\ast} z^{\circ}_{k,t}}( z_{k,t})_{+} \bigg)^{1+\delta} \bigg]<\infty,
  \end{equation*}
 where $x_{+}:=\max(x,0)$.
\end{lem}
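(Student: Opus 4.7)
The strategy is to reduce both claims to a single auxiliary moment bound: for every $\alpha\in(0,s_{\infty})$ and $p\in[1,s_{\infty}/\alpha)$,
$$
G_{\alpha,p}(t):=\E\bigg[\bigg(\sum_{k=1}^{\mathcal{Y}_t} e^{\alpha z_{k,t}}\bigg)^{p}\bigg]<\infty\quad\text{for every }t\ge 0.
$$
Since $\sum_{k}e^{\gamma^{\ast} z^{\circ}_{k,t}}=e^{-t\Phi(\gamma^{\ast})}\sum_{k}e^{\gamma^{\ast} z_{k,t}}$ (using $\gamma^{\ast}\mu(\gamma^{\ast})=\Phi(\gamma^{\ast})$), the first claim reduces to $G_{\gamma^{\ast},1+\delta}(t)<\infty$, which is admissible by the hypothesis $(1+\delta)\gamma^{\ast}<s_{\infty}$.

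To establish the auxiliary bound I would truncate by the number of splits. Writing $N_{t}$ for the number of splits of the underlying Yule process up to time $t$ and setting
$$
h_{m}(t):=\E\bigg[\bigg(\sum_{k=1}^{\mathcal{Y}_t} e^{\alpha z_{k,t}}\bigg)^{p}\mathbf{1}_{\{N_{t}\le m\}}\bigg],
$$
the pointwise bound $\bigl(\sum_{k}e^{\alpha z_{k,t}}\bigr)^{p}\mathbf{1}_{\{N_{t}\le m\}}\le((N-1)m+1)^{p-1}\sum_{k}e^{p\alpha z_{k,t}}$ together with $\E[\sum_{k}e^{p\alpha z_{k,t}}]=e^{t\Phi(p\alpha)}<\infty$ shows that $h_{m}(t)<\infty$ for every $m\ge 0$. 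Conditioning on the first split time $T_{1}\sim\mathrm{Exp}(1)$ and invoking the branching property together with the inclusion $\{N_{t}\le m\}\cap\{T_{1}\le t\}\subset\bigcap_{j=1}^{N}\{N^{(j)}_{t-T_{1}}\le m-1\}$ (with $N^{(j)}_{s}$ the number of splits in the $j$-th subtree on $[0,s]$) yields the recursion
$$
h_{m}(t)\le e^{-t}+K\int_{0}^{t}e^{-(t-u)}h_{m-1}(u)\,{\rm d}u,\qquad K:=N^{p-1}(\Phi(p\alpha)+1),
$$
with $h_{0}(t)=e^{-t}$. A direct induction on $m$ then gives $h_{m}(t)\le e^{(K-1)t}$ uniformly in $m$, and monotone convergence as $m\uparrow\infty$ transfers the bound to $G_{\alpha,p}(t)$.

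For the second claim I would combine the first claim with the elementary estimates $(z_{k,t})_{+}\le t|\mu(\gamma^{\ast})|+|z^{\circ}_{k,t}|$ and $|x|\le\varepsilon^{-1}(e^{\varepsilon x}+e^{-\varepsilon x})$, the latter valid for every $\varepsilon>0$. Choosing $\varepsilon\in(0,\gamma^{\ast})$ small enough that $(1+\delta)(\gamma^{\ast}+\varepsilon)<s_{\infty}$ (possible by the hypothesis) gives the pointwise inequality
$$
\sum_{k}e^{\gamma^{\ast} z^{\circ}_{k,t}}(z_{k,t})_{+}\le t|\mu(\gamma^{\ast})|\mathcal{M}_{t}(\gamma^{\ast})+\varepsilon^{-1}\sum_{k}e^{(\gamma^{\ast}+\varepsilon)z^{\circ}_{k,t}}+\varepsilon^{-1}\sum_{k}e^{(\gamma^{\ast}-\varepsilon)z^{\circ}_{k,t}}.
$$
Each summand on the right-hand side has finite $(1+\delta)$-moment by the auxiliary bound applied with $\alpha\in\{\gamma^{\ast},\gamma^{\ast}+\varepsilon,\gamma^{\ast}-\varepsilon\}$ (in each case $\alpha>0$ and $(1+\delta)\alpha<s_{\infty}$), and Minkowski's inequality delivers the claimed estimate.

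The main obstacle is the circular dependence one would encounter in applying Gr\"onwall's inequality directly to $G_{\alpha,p}$: a~priori this quantity could be infinite, making the recursion vacuous. Introducing the level-$m$ truncation $h_{m}$ and running the induction there circumvents this issue cleanly, at the cost of the harmless shift $m\mapsto m-1$ on the right-hand side of the recursion.
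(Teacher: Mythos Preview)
Your argument is correct and takes a genuinely different route from the paper's.

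The paper proceeds by the pointwise bound $(\sum_{k=1}^{n}x_k)^{1+\delta}\le n^{\delta}\sum_{k}x_k^{1+\delta}$, obtaining
$\E\big[(\sum_{k}e^{\gamma^{\ast}z_{k,t}})^{1+\delta}\big]\le \E\big[\mathcal{Y}_t^{\delta}\sum_{k}e^{(1+\delta)\gamma^{\ast}z_{k,t}}\big]$,
and then decouples the factor $\mathcal{Y}_t^{\delta}$ from the positions by H\"older's inequality (both in $\omega$ and over the index $k$), pushing the exponent on the positions up to some $p(1+\delta)\gamma^{\ast}<s_{\infty}$. Finiteness then comes from two ingredients: $\E\sum_{k}e^{p(1+\delta)\gamma^{\ast}z_{k,t}}=e^{t\Phi(p(1+\delta)\gamma^{\ast})}<\infty$, and the fact that $\mathcal{Y}_t$ has moments of all orders, which the paper reads off the explicit generating function of the Yule process. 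The second claim is handled by the \emph{same} H\"older decoupling, carrying the factor $(z_{k,t})_{+}^{1+\delta}$ along and using $\E\int e^{p(1+\delta)\gamma^{\ast}y}y_{+}^{p(1+\delta)}\mathcal{Z}_t({\rm d}y)<\infty$.

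Your approach instead sets up a renewal-type recursion for the truncated moments $h_m(t)$ via the first split time and closes it by induction in $m$, with monotone convergence at the end; this is essentially a Gr\"onwall argument made rigorous by the level truncation. Its advantage is robustness: it never appeals to the explicit law of $\mathcal{Y}_t$ and would work verbatim for random $N$ with $\E N\in(1,\infty)$ (a situation the paper itself mentions in a remark). The paper's approach is shorter once one is willing to use the explicit Yule moments. For the second claim, your reduction via $(z_{k,t})_{+}\le t|\mu(\gamma^{\ast})|+|z^{\circ}_{k,t}|$ and $|x|\le\varepsilon^{-1}(e^{\varepsilon x}+e^{-\varepsilon x})$ is a neat alternative: rather than repeating the H\"older argument with an extra polynomial factor, you absorb the factor into nearby exponentials and reduce to three instances of the first claim at parameters $\gamma^{\ast}$ and $\gamma^{\ast}\pm\varepsilon$.
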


\begin{proof}


Let us prove the first claim. Using the inequality
$$
\left(\sum_{k=1}^{n}x_k\right)^{1+\delta}\leq n^{\delta}\left(\sum_{k=1}^{n}x_k^{1+\delta}\right)
$$
which holds for $n\in\N$ and arbitrary nonnegative reals $x_1,x_2,\ldots,x_n$, we infer
$$
\E\bigg( \sum_{k=1}^{\mathcal{Y}_t}  e^{\gamma^{\ast} z_{k,t}} \bigg)^{1+\delta}\leq
\E\left( \mathcal{Y}_t^{\delta} \sum_{k=1}^{ \mathcal{Y}_t}e^{(1+\delta)\gamma^{\ast} z_{k,t}}\right)=\sum_{k=1}^{\infty}\E\left( \mathcal{Y}_t^{\delta} e^{(1+\delta)\gamma^{\ast} z_{k,t}}{\bf 1}_{\{k\leq \mathcal{Y}_t\}}\right).
$$
Pick $p>1$ such that $p(1+\delta)\gamma^{\ast}<s_{\infty}$ and $q>1$ such that $1/p+1/q=1$. By H\"{o}lder's inequality we obtain
$$
\E\left( \mathcal{Y}_t^{\delta} e^{(1+\delta)\gamma^{\ast} z_{k,t}}{\bf 1}_{\{k\leq \mathcal{Y}_t\}}\right)\leq \left(\E e^{p(1+\delta)\gamma^{\ast} z_{k,t}}{\bf 1}_{\{k\leq \mathcal{Y}_t\}}\right)^{1/p}\left(\E \mathcal{Y}_t^{q \delta}{\bf 1}_{\{k\leq \mathcal{Y}_t\}}\right)^{1/q},
$$
and thereupon
\begin{multline}\label{eq:lem2_proof1}
\E\bigg( \sum_{k=1}^{\mathcal{Y}_t}  e^{\gamma^{\ast}z_{k,t}} \bigg)^{1+\delta}\leq \sum_{k=1}^{\infty}\left(\E e^{p(1+\delta)\gamma^{\ast}z_{k,t}}{\bf 1}_{\{k\leq \mathcal{Y}_t\}}\right)^{1/p}\left(\E\mathcal{Y}_t^{q \delta}{\bf 1}_{\{k\leq \mathcal{Y}_t\}}\right)^{1/q}\\
\leq \left(\sum_{k=1}^{\infty}\E e^{p(1+\delta)\gamma^{\ast} z_{k,t}}{\bf 1}_{\{k\leq \mathcal{Y}_t\}}\right)^{1/p}\left(\sum_{k=1}^{\infty}\E \mathcal{Y}_t^{q \delta}{\bf 1}_{\{k\leq \mathcal{Y}_t\}}\right)^{1/q},\end{multline}
where the last passage is a consequence of H\"{o}lder's inequality for series. The first factor on the right-hand side is finite because $p(1+\delta)\gamma^{\ast}<s_{\infty}$ and
\begin{multline*}
\sum_{k=1}^{\infty}\E e^{p(1+\delta)\gamma^{\ast} z_{k,t}}{\bf 1}_{\{k\leq \mathcal{Y}_t\}}=\E \left(\sum_{k=1}^{\mathcal{Y}_t}e^{p(1+\delta)\gamma^{\ast} z_{k,t}}\right)=\E \int_{\R}e^{p(1+\delta)\gamma^{\ast}y}\mathcal{Z}_t({\rm d}y)\\
=e^{\Phi(p(1+\delta)\gamma^{\ast})t}\E\mathcal{M}_t(p(1+\delta)\gamma^{\ast})=e^{\Phi(p(1+\delta)\gamma^{\ast})t}<\infty.
\end{multline*}
Formulae \eqref{eq:y_and_nu_connection} and \eqref{eq:nu_gen_func} imply that $\mathcal{Y}_t$ has exponential moment of some positive order for every fixed $t$. Therefore,
$$
\sum_{k=1}^{\infty}\E\mathcal{Y}_t^{q \delta}{\bf 1}_{\{k\leq \mathcal{Y}_t\}}=\E\mathcal{Y}_t^{q \delta+1}<\infty
$$
and the proof  of the first claim is complete.

To prove the second inequality we use exactly the same arguments to get the upper bound
\begin{multline*}
\E\bigg( \sum_{k=1}^{\mathcal{Y}_t}  e^{\gamma^{\ast}z_{k,t}}(z_{k,t})_{+} \bigg)^{1+\delta}\\
\leq \left(\sum_{k=1}^{\infty}\E e^{p(1+\delta)\gamma^{\ast}z_{k,t}}(z_{k,t})^{p(1+\delta)}_{+}{\bf 1}_{\{k \leq \mathcal{Y}_t\}}\right)^{1/p}\left(\sum_{k=1}^{\infty}\E\mathcal{Y}_t^{q \delta}{\bf 1}_{\{k\leq \mathcal{Y}_t\}}\right)^{1/q}.
\end{multline*}
It remains to note that
$$
\sum_{k=1}^{\infty}\E e^{p(1+\delta)\gamma^{\ast}z_{k,t}}(z_{k,t})^{p(1+\delta)}_{+}{\bf 1}_{\{k \leq \mathcal{Y}_t\}}=\E \int_{\R}e^{p(1+\delta)\gamma^{\ast}y}y^{p(1+\delta)}_+\mathcal{Z}_t({\rm d}y)<\infty,
$$
since $p(1+\delta)\gamma^{\ast}<s_{\infty}$. The proof is complete.
\end{proof}

\begin{proof}[Proof of Proposition \ref{prop: as}]

{\sc Proof of part (i).} Fix $\theta>0$. Define a point process
$$
 \Xi:=\sum_{k=1}^{\mathcal{Y}_{\theta}}\delta_{-\gamma^{\ast}z^{\circ}_{k,\theta}},
$$
and consider a discrete-time branching random walk $(\mathcal{Z}_n(\theta))_{n=0,1,2,\ldots}$, where
$$
\mathcal{Z}_n(\theta):=\sum_{k=1}^{\mathcal{Y}_{n\theta}}\delta_{ -\gamma^{\ast} z^{\circ}_{k,n\theta}},\quad n=0,1,2,\ldots.
$$
The discrete-time branching random walk $(\mathcal{Z}_k(\theta))$ has the displacement process $\Xi$ and satisfies the following three conditions:
\begin{multline}\label{eq:boundary_case_skeleton}
\E\left(\int_{\R}e^{-y}\mathcal{Z}_1(\theta)({\rm d}y)\right)=1,\quad \E\left(\int_{\R}e^{-y}y\mathcal{Z}_1(\theta)({\rm d}y)\right)=0\quad \text{and}\\
\E\left(\int_{\R}e^{-y}y^2\mathcal{Z}_1(\theta)({\rm d}y)\right)=\theta(\gamma^{\ast})^2\Phi^{\prime\prime}(\gamma^{\ast})<\infty,
\end{multline}
where the last two relations are secured by Lemma \ref{lem:1}. Moreover, Lemma \ref{lem:2} yields
$$
\E\left(\int_{\R}e^{-y}\mathcal{Z}_1(\theta)({\rm d}y)\right)^{1+\delta}<\infty\quad\text{and}\quad \E\left(\int_{\R}e^{-y}y_{+}\mathcal{Z}_1(\theta)({\rm d}y)\right)^{1+\delta}<\infty,
$$
whence conditions (5.3) in  \cite{Shi:2015} hold. Therefore, Assumption (H) in the same reference holds for the discrete-time branching random walk $(\mathcal{Z}_n(\theta))_{n=0,1,2,\ldots}$ for every fixed $\theta>0$. In particular, by Theorem 5.29 in \cite{Shi:2015}, see also Theorem 1.1 in \cite{AidekonShi2014}, we have
\begin{multline}\label{eq:conv_along_integers}
\sqrt{n} \mathcal{M}_{n}(\gamma^{\ast})=\sqrt{n} \sum_{k=1}^{\mathcal{Y}_{n}}e^{\gamma^{\ast}z^{\circ}_{k,n}}
=\sqrt{n} \int_{\R}e^{-y}\mathcal{Z}_{n}(1)({\rm d}y)\\
\overset{\P}{\to} \sqrt{\frac{2}{\pi (\gamma^{\ast})^2 \Phi^{\prime\prime}(\gamma^{\ast})}}D_{\infty}=:D,\quad n\to\infty,
\end{multline}
where $D$ is a.s. positive, because in our settings the process does not extinct with probability one. It remains to show the convergence in probability to $D$ along $t\to\infty$, $t\in\R$. This can be accomplished by adopting the argument given on p.~47 in \cite{Biggins+Kyprianou:1996} as follows. From \eqref{eq:conv_along_integers} we know that for every fixed $x>0$
$$
\left(\sqrt{n+1} \mathcal{M}_{n+1}(\gamma^{\ast})-\sqrt{n} \mathcal{M}_{n}(\gamma^{\ast})\right)\1_{\{\sqrt{n} \mathcal{M}_{n}(\gamma^{\ast})\leq x\}}\overset{\P}{\to} 0,\quad n\to\infty,
$$
and therefore by the dominated convergence theorem	we have for every $u\geq 0$
\begin{equation*}
\E \exp\left(-u\left(\left(\sqrt{n+1} \mathcal{M}_{n+1}(\gamma^{\ast})-\sqrt{n} \mathcal{M}_{n}(\gamma^{\ast})\right)\1_{\{\sqrt{n} \mathcal{M}_{n}(\gamma^{\ast})\leq x\}}\right)\right)\to 1,\quad n\to\infty.
\end{equation*}
Further, by the martingale property of $(\mathcal{M}_t(\gamma^{\ast}))$ and applying Jensen's inequality twice to the convex function $x\mapsto \exp(-ux)$ we obtain for every $t\geq 0$
\begin{align*}
& \E \exp\left(-u\left(\left(\sqrt{[t]+1} \mathcal{M}_{[t]+1}(\gamma^{\ast})-\sqrt{[t]} \mathcal{M}_{[t]}(\gamma^{\ast})\right)\1_{\{\sqrt{[t]} \mathcal{M}_{[t]}(\gamma^{\ast})\leq x\}}\right)\right)\\
&=\E \left[\E \left\{\exp\left(-u\left(\left(\sqrt{[t]+1} \mathcal{M}_{[t]+1}(\gamma^{\ast})-\sqrt{[t]} \mathcal{M}_{[t]}(\gamma^{\ast})\right)\1_{\{\sqrt{[t]} \mathcal{M}_{[t]}(\gamma^{\ast})\leq x\}}\right)\right)\Big|\mathcal{F}_t \right\}\right]\\
&\geq \E \left\{\exp\left(-u\left(\left(\sqrt{[t]+1} \mathcal{M}_{t}(\gamma^{\ast})-\sqrt{[t]} \mathcal{M}_{[t]}(\gamma^{\ast})\right)\1_{\{\sqrt{[t]} \mathcal{M}_{[t]}(\gamma^{\ast})\leq x\}}\right)\right)\right\}\\
&= \E \left[\E \left\{\exp\left(-u\left(\left(\sqrt{[t]+1} \mathcal{M}_{t}(\gamma^{\ast})-\sqrt{[t]} \mathcal{M}_{[t]}(\gamma^{\ast})\right)\1_{\{\sqrt{[t]} \mathcal{M}_{[t]}(\gamma^{\ast})\leq x\}}\right)\right)\Big|\mathcal{F}_{[t]}\right\}\right]\\
&\geq \E \left\{\exp\left(-u\left(\left(\sqrt{[t]+1} \mathcal{M}_{[t]}(\gamma^{\ast})-\sqrt{[t]} \mathcal{M}_{[t]}(\gamma^{\ast})\right)\1_{\{\sqrt{[t]} \mathcal{M}_{[t]}(\gamma^{\ast})\leq x\}}\right)\right)\right\}.
\end{align*}
Sending $t\to\infty$ in the above inequalities we obtain
\begin{equation}\label{eq:conv_along_integers2}
\left(\sqrt{[t]+1} \mathcal{M}_{t}(\gamma^{\ast})-\sqrt{[t]} \mathcal{M}_{[t]}(\gamma^{\ast})\right)\1_{\{\sqrt{[t]} \mathcal{M}_{[t]}(\gamma^{\ast})\leq x\}}\overset{\P}{\to} 0,\quad t\to\infty.
\end{equation}
By the triangle inequality
\begin{align*}
\left|\sqrt{t}\mathcal{M}_{t}(\gamma^{\ast})-D\right|&\leq \left|\sqrt{t} \mathcal{M}_{t}(\gamma^{\ast})-\sqrt{[t]+1} \mathcal{M}_{t}(\gamma^{\ast})\right|\\
& + \left|\sqrt{[t]+1} \mathcal{M}_{t}(\gamma^{\ast})-\sqrt{[t]} \mathcal{M}_{[t]}(\gamma^{\ast})\right|\1_{\{\sqrt{[t]} \mathcal{M}_{[t]}(\gamma^{\ast})\leq x\}}\\
&+ \left|\sqrt{[t]+1} \mathcal{M}_{t}(\gamma^{\ast})-\sqrt{[t]} \mathcal{M}_{[t]}(\gamma^{\ast})\right|\1_{\{\sqrt{[t]} \mathcal{M}_{[t]}(\gamma^{\ast})> x\}}\\
&+ |\sqrt{[t]} \mathcal{M}_{[t]}(\gamma^{\ast})-D|.
\end{align*}
The second and fourth summands converge to zero in probability as $t\to\infty$ by \eqref{eq:conv_along_integers2} and \eqref{eq:conv_along_integers}, respectively. The first summand  does this by Markov's inequality since $\sqrt{t+1}-\sqrt{t}\to 0$ as $t\to\infty$. The probability that the third summand is larger than some $\delta>0$ is bounded from above by $\P\{\sqrt{[t]} \mathcal{M}_{[t]}(\gamma^{\ast})> x\}$ which can be made arbitrarily small by choosing $x$ large enough in view of \eqref{eq:conv_along_integers} and a.s. finiteness of $D$. This completes the proof of convergence in part (i).

Let us show that $D_{\infty}$ (and also $D$) satisfies \eqref{eq:d_fixed_point}. Let $\tau_1$ be the time of the first split in $(\mathcal{Z}_t)_{t\geq 0}$, then
$$
\mathcal{Z}_t(\cdot)\overset{d}{=}{\bf 1}_{\{\tau_1>t\}}\delta_0(\cdot)+{\bf 1}_{\{\tau_1\leq t\}}\sum_{k=1}^{N}\mathcal{Z}^{(k)}_{t-\tau_1}(\cdot - z_{k,\tau_1}),
$$
and therefore
\begin{align*}
\sqrt{t}\mathcal{M}_t(\gamma^{\ast})&=\sqrt{t}e^{-\Phi(\gamma^{\ast})t}\int_{\R}e^{\gamma^{\ast}y}\mathcal{Z}_t({\rm d}y)\overset{d}{=}{\bf 1}_{\{\tau_1>t\}}\sqrt{t}e^{-\Phi(\gamma^{\ast})t}\\
&+{\bf 1}_{\{\tau_1\leq t\}}e^{-\Phi(\gamma^{\ast})\tau_1}\sum_{k=1}^{N}\sqrt{t}e^{-\Phi(\gamma^{\ast})(t-\tau_1)}A_k^{\gamma^{\ast}}\int_{\R}e^{\gamma^{\ast}y}
\mathcal{Z}^{(k)}_{t-\tau_1}({\rm d}y).
\end{align*}
Sending $t\to\infty$ yields \eqref{eq:d_fixed_point} because $\tau_1$ has the standard exponential law and is independent of $(\mathcal{Z}^{(k)}_t)_{t\geq 0}$, $k\in\N$. The proof of part (i) is complete.

\vspace{0.5mm}

\noindent
{\sc Proof of part (ii).} The claim of part (ii) can be reformulated as follows:
$$
\min_{k=1,\ldots,\mathcal{Y}_t}\left( -\gamma^{\ast}z^{\circ}_{k,t}\right)-\frac{1}{2}\log t\overset{\P}{\to}+\infty,\quad t\to\infty.
$$
Fix arbitrary $M>0$ and define a function
$$
p_{M}(t):=\P\bigg\{\min_{k=1,\ldots,\mathcal{Y}_t}\left(-\gamma^{\ast}z^{\circ}_{k,t}\right)-\frac{1}{2}\log t<M\bigg\},\quad t\geq 0.
$$
By Theorem 5.12 in \cite{Shi:2015} applied to the discrete-time branching random walk $(\mathcal{Z}_n^{(\theta)})_{n=0,1,2,\ldots}$, see also \cite{AidekonShi2010,HuShi2009}, we already know that
$$
\lim_{n\to\infty}p_{M}(n\theta)=0
$$
for every fixed $\theta>0$.  In order to finish the proof of part (ii) it remains to show that
$$
\lim_{t\to\infty,t\in\R}p_{M}(t)=0.
$$
According to the Croft-Kingman lemma, see Corollary 2 in \cite{Kingman:1963}, it is enough to check that $t\mapsto p_{M}(t)$ is right-continuous. To prove the latter statement, note that for $0\leq s\leq t$ we have
\begin{multline*}
|p_{M}(t)-p_{M}(s)|\leq \P\{\text{there are splits during } [s,t]\}\\
+\P\left\{\min_{k=1,\ldots,\mathcal{Y}_t}\left(-\gamma^{\ast}z^{\circ}_{k,t}\right)\in \Big[M+\frac{1}{2}\log s, M+\frac{1}{2}\log t\Big) \right\},
\end{multline*}
where we have used the equality $\min_{k=1,\ldots,\mathcal{Y}_t}\left(-\gamma^{\ast}z^{\circ}_{k,t}\right)=\min_{k=1,\ldots,\mathcal{Y}_s}\left(-\gamma^{\ast}z^{\circ}_{k,s}\right)$ which holds if there are no splits in $[s,t]$.
The right-hand side of the last display converges to $0$ as $s\to t+$. The proof of part (ii) is complete.
\end{proof}

\section{Proof of Theorem \ref{thm:main}}\label{sec:proof}
The key ingredient in the proof is Propostion \ref{prop: as} and the following lemma.

\begin{lem}\label{lem:3}
Assume that $(r_t)_{t\geq 0}$ is an integer-valued random process such that $r_t\overset{\P}{\to}\infty$, as $t\to\infty$. Further, suppose that for every $t\geq 0$ there is an array $(a_{k,t})_{k=1,\ldots,r_t}$ of a.s.~positive random weights such that
$$
\sum_{k=1}^{r_t}a_{k,t}^{\gamma}\overset{\P}{\to}a_{\infty}\quad\text{and}\quad \max_{k=1,\ldots,r_t}a_{k,t}\overset{\P}{\to} 0,\quad t\to\infty,
$$
for some a.s. positive random variable $a_{\infty}$ and $\gamma\in(0,2]$. Let $(X_k)_{k\in\N}$ be a sequence of independent random variables with common distribution function $F_0$ satisfying $(H_{\gamma})$ and which are independent of $(a_{k,t})_{k=1,\ldots,r_t}$ and $r_t$ for every fixed $t\geq 0$. Put
$$
S_t:=\sum_{k=1}^{r_t} a_{k,t}X_k,\quad t\geq 0.
$$
Then
$$
\lim_{t\to\infty}\E\exp(i\xi S_t)=\E \widehat{g}_{\gamma}(\xi a_{\infty}^{1/\gamma}),\quad \xi\in\R,
$$
where $\widehat{g}_{\gamma}$ is defined by \eqref{chaSta}.
\end{lem}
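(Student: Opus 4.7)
The plan is to condition on the $\sigma$-field $\mathcal{G}_t$ generated by $r_t$ and the weights $(a_{k,t})_{k=1,\ldots,r_t}$, which reduces the assertion to a triangular-array stable limit theorem with non-random weights. Writing $\phi_0$ for the characteristic function of $F_0$, independence of the $X_k$ from $\mathcal{G}_t$ gives
\begin{equation*}
\E\exp(i\xi S_t)=\E\Psi_t(\xi),\qquad \Psi_t(\xi):=\prod_{k=1}^{r_t}\phi_0(\xi a_{k,t}),
\end{equation*}
and since $|\Psi_t|\le 1$, bounded convergence will conclude the proof once I establish $\Psi_t(\xi)\overset{\P}{\to}\widehat{g}_\gamma(\xi a_\infty^{1/\gamma})$ for each fixed $\xi\in\R$.

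The key analytic input will be the classical local expansion
\begin{equation*}
\log\phi_0(\eta)=\log\widehat{g}_\gamma(\eta)+o(|\eta|^\gamma),\qquad\eta\to 0,
\end{equation*}
which is the standard characterisation of the domain of normal attraction and under $(H_\gamma)$ is verified case by case from~\eqref{chaSta}. Positivity of the $a_{k,t}$ together with the homogeneity $\log\widehat{g}_\gamma(c\eta)=c^\gamma\log\widehat{g}_\gamma(\eta)$ for $c>0$ (visible by inspection of all four cases, including $\gamma=1$) then yield the clean identity
\begin{equation*}
\sum_{k=1}^{r_t}\log\widehat{g}_\gamma(\xi a_{k,t})=\Bigl(\sum_{k=1}^{r_t}a_{k,t}^\gamma\Bigr)\log\widehat{g}_\gamma(\xi)=\log\widehat{g}_\gamma\Bigl(\xi\Bigl(\sum_{k=1}^{r_t}a_{k,t}^\gamma\Bigr)^{1/\gamma}\Bigr),
\end{equation*}
so the hypothesis $\sum_{k}a_{k,t}^\gamma\overset{\P}{\to}a_\infty$ and continuity of $\widehat{g}_\gamma$ immediately give convergence of the leading-order part of $\log\Psi_t(\xi)$ to $\log\widehat{g}_\gamma(\xi a_\infty^{1/\gamma})$.

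The remaining step, which I expect to be the main technical obstacle, is to control the remainder $R_t(\xi):=\sum_{k=1}^{r_t}h(\xi a_{k,t})$ with $h:=\log\phi_0-\log\widehat{g}_\gamma$, since both the summation length and the weights are random and the little-$o$ expansion is only available locally. The strategy will be to exploit the second weight hypothesis: given $\varepsilon>0$, pick $\delta>0$ so small that $|h(\eta)|\le\varepsilon|\eta|^\gamma$ whenever $|\eta|\le\delta$, and restrict attention to the event $E_t(\delta):=\{|\xi|\max_{k\le r_t}a_{k,t}\le\delta\}$; there the triangle inequality gives
\begin{equation*}
|R_t(\xi)|\le\varepsilon|\xi|^\gamma\sum_{k=1}^{r_t}a_{k,t}^\gamma,
\end{equation*}
whose right-hand side is tight in probability. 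Since $\P(E_t(\delta))\to 1$ by $\max_{k\le r_t}a_{k,t}\overset{\P}{\to}0$ and $\varepsilon>0$ is arbitrary, $R_t(\xi)\overset{\P}{\to}0$, hence $\Psi_t(\xi)\overset{\P}{\to}\widehat{g}_\gamma(\xi a_\infty^{1/\gamma})$, and bounded convergence finishes the proof.
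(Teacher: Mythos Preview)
Your argument is correct and follows essentially the same route as the paper: condition on the weights, expand $\log\phi_0$ near zero according to $(H_\gamma)$, identify the leading part via the hypothesis $\sum_k a_{k,t}^\gamma\overset{\P}{\to}a_\infty$, and kill the remainder on the event $\{|\xi|\max_k a_{k,t}\le\delta\}$ using tightness of $\sum_k a_{k,t}^\gamma$, then apply bounded convergence. The only cosmetic difference is that you handle all four cases of~\eqref{chaSta} at once through the homogeneity $\log\widehat{g}_\gamma(c\eta)=c^\gamma\log\widehat{g}_\gamma(\eta)$ for $c>0$, whereas the paper spells out case~(b) of $(H_1)$ and declares the others analogous.
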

\begin{proof}
The proof is based on the following asymptotic expansions of the characteristic function $\phi_0$ of $X_0$, that are equivalent to the corresponding assumptions of the distribution function $F_0$, see Theorem 2.6.5 in \cite{Ibragimov+Linnik:1971}:
\begin{itemize}
\item if the case (a) of $(H_1)$ holds, then $\log \phi_0(\xi)=i m_0\xi+o(\xi)$ as $\xi\to 0$;
\item if the case (b) of $(H_1)$ holds, then $\log \phi_0(\xi)=i m_0\xi-\pi c_0^{+}|\xi|+o(\xi)$ as $\xi\to 0$;
\item if $(H_2)$ holds, then $\log \phi_0(\xi)=-\frac{\sigma_0^2}{2}\xi^2+o(\xi^2)$ as $\xi\to 0$;
\item if $(H_{\gamma})$ holds with $\gamma\in(0,1)\cup(1,2)$, then
$$
\log \phi_0(\xi)=-k_0|\xi|^{\gamma}(1-i \eta_0 \tan({\pi\gamma}/{2} )\operatorname{sign}\xi)+o(|\xi|^{\gamma}), \quad \xi\to 0.
$$
\end{itemize}
Using the above expansions the rest of the proof is standard and relies on the formula
$$
\E\exp(i\xi S_t)=\E \left(\exp\left(\sum_{k=1}^{r_t}\log \phi_0(a_{k,t}\xi)\right)\right),\quad \xi\in\R,\quad t\geq 0.
$$
We will give full details in the case (b) of $(H_1)$. The other cases can be checked similarly. From the equality
\begin{multline*}
\E\exp(i\xi S_t)=\E \Bigg(\exp\Bigg(\sum_{k=1}^{r_t}\left(\log \phi_0(a_{k,t}\xi)-i m_0 \xi a_{k,t}+\pi c_0^{+}|\xi |a_{k,t}\right)\\
+i m_0 \xi\sum_{k=1}^{r_t}a_{k,t}-\pi c_0^{+}|\xi | \sum_{k=1}^{r_t}a_{k,t}\Bigg)\Bigg),\quad \xi\in\R,\quad t\geq 0,
\end{multline*}
we see that it is enough to check that for every fixed $\xi\in\R$
\begin{equation}\label{eq:lem3}
\sum_{k=1}^{r_t}\left(\log \phi_0(a_{k,t}\xi)-i m_0 \xi a_{k,t}+\pi c_0^{+}|\xi | \sum_{k=1}^{r_t}a_{k,t}\right)\overset{\P}{\to} 0,\quad t\to\infty.
\end{equation}
Fix $\varepsilon>0$. There exists $x_0(\varepsilon)>0$ such that
$$
|\log\phi_0(x)-i m_0 x+\pi c_0^{+}|x||\leq \varepsilon |x|,\quad |x|\leq x_0(\varepsilon).
$$
Therefore, for every fixed $\varepsilon_0>0$
\begin{align*}
&\hspace{-1cm}\P\left\{\sum_{k=1}^{r_t}|\log \phi_0(a_{k,t}\xi)-i m_0 \xi a_{k,t}+\pi c_0^{+}|\xi |a_{k,t}|>\varepsilon_0\right\}\\
&\leq \P\left\{\varepsilon|\xi| \sum_{k=1}^{r_t}a_{k,t}>\varepsilon_0\right\}+\P\left\{|\xi|a_{k,t}>x_0(\varepsilon)\text { for some }k=1,\ldots,r_t\right\}\\
&=\P\left\{\varepsilon|\xi| \sum_{k=1}^{r_t}a_{k,t}>\varepsilon_0\right\}+\P\left\{|\xi|\max_{k=1,\ldots,r_t}a_{k,t}>x_0(\varepsilon)\right\}.
\end{align*}
Sending $t\to\infty$ and then $\varepsilon\to +0$ yields \eqref{eq:lem3}. The proof is complete.
\end{proof}

\begin{proof}[Proof of Theorem \ref{thm:main}]
Put $a_{k,t}:=t^{\frac{1}{2\gamma^{\ast}}}e^{z_{k,t}-t\frac{\Phi(\gamma^{\ast})}{\gamma^{\ast}}}$, $r_t:=\mathcal{Y}_t$, $a_{\infty}:=D'=\sqrt{\frac{2}{\pi (\gamma^{\ast})^2\Phi^{\prime\prime}(\gamma^{\ast})}}D_{\infty}$, $\gamma=\gamma^{\ast}$, and finally
$$
S_t=t^{\frac{1}{2\gamma^{\ast}}}e^{-\mu(\gamma^{\ast})t}\sum_{k=1}^{\mathcal{Y}_t}e^{z_{k,t}}X_k,\quad t\geq 0.
$$
From Proposition \ref{prop: as} we know that all the assumptions of Lemma \ref{lem:3} hold and therefore
$$
\lim_{t\to\infty}\E\exp\left(i\xi S_t\right)=\E \widehat{g}_{\gamma}\left(\xi \left(\sqrt{\frac{2}{\pi (\gamma^{\ast})^2\Phi^{\prime\prime}(\gamma^{\ast})}}D_{\infty}\right)^{1/\gamma}\right),\quad \xi\in\R.
$$
By Proposition \ref{prop:prob_solution}
$$
\E\exp\left(i\xi S_t\right)=\phi(t,t^{\frac{1}{2\gamma^{\ast}}}e^{-\mu(\gamma^{\ast})t}\xi)
$$
which proves convergence. The fact that $D_{\infty}$ satisfies \eqref{eq:d_fixed_point} has already been proved above. The proof of Theorem \ref{thm:main} is complete.
\end{proof}

\section*{Acknowledgment} We thank two anonymous referees for the detailed and useful reports containing numerous remarks and suggestions.

\end{document}